\newcommand{\R}{\mathbb{R}}
\newcommand{\dif}[0]{\ensuremath{\,\mathrm{d}}}
\newcommand{\norm}[1]{\ensuremath{\Vert #1 \Vert}}
\newcommand{\abs}[1]{\ensuremath{\vert #1 \vert}}
\newcommand{\scabs}[1]{\ensuremath{\left\vert #1 \right\vert}}
\newcommand{\B}[0]{\ensuremath{\mathcal{B}}}
\newcommand{\om}{\ensuremath{m^{\sharp}}}
\newcommand{\um}{\ensuremath{m^{\flat}}}
\DeclareMathOperator*{\esssup}{ess\,sup}
\DeclareMathOperator*{\spt}{supp}
\DeclareMathOperator*{\dive}{div}
\def\vint_#1{\mathchoice%
          {\mathop{\kern 0.2em\vrule width 0.6em height 0.69678ex depth -0.58065ex
                  \kern -0.8em \intop}\nolimits_{\kern -0.4em#1}}%
          {\mathop{\kern 0.1em\vrule width 0.5em height 0.69678ex depth -0.60387ex
                  \kern -0.6em \intop}\nolimits_{#1}}%
          {\mathop{\kern 0.1em\vrule width 0.5em height 0.69678ex depth -0.60387ex
                  \kern -0.6em \intop}\nolimits_{#1}}%
          {\mathop{\kern 0.1em\vrule width 0.5em height 0.69678ex depth -0.60387ex
                  \kern -0.6em \intop}\nolimits_{#1}}}
\theoremstyle{plain}
\newtheorem{theorem}{Theorem}
\newtheorem{lemma}[theorem]{Lemma}
\newtheorem{proposition}[theorem]{Proposition}
\numberwithin{theorem}{section}
\numberwithin{equation}{section}
\theoremstyle{definition}
\newtheorem{definition}[theorem]{Definition}
\theoremstyle{remark}
\newtheorem{remark}[theorem]{Remark}
\author{Teemu Lukkari}
\address[Teemu Lukkari]{Department of Mathematics and Statistics\\
  P.O. Box 35 (MaD)\\ 40014 University of
  Jyv\"askyl\"a\\Jyv\"askyl\"a, Finland} \email{teemu.j.lukkari@jyu.fi}
\date{\today}
\subjclass[2000]{Primary 35K55, Secondary 35K15, 35K20}
\keywords{Porous medium equation, Fast diffusion equation, stability}
\begin{document}

\title{Stability of solutions to nonlinear diffusion equations}

\begin{abstract}
  We prove stability results for nonlinear diffusion equations of the
  porous medium and fast diffusion types with respect to the
  nonlinearity power $m$: solutions with fixed data converge in a
  suitable sense to the solution of the limit problem with the same
  data as $m$ varies. Our arguments are elementary and based on a
  general principle. We use neither regularity theory nor nonlinear
  semigroups, and our approach applies to e.g. Dirichlet problems in
  bounded domains and Cauchy problems on the whole space.
\end{abstract}

\maketitle

\section{Introduction}

We study the stability of positive solutions to the parabolic equation
\begin{equation}
  \label{eq:eq-intro}
  \partial_t u-\Delta u^m=0
\end{equation}
with respect to perturbations in the nonlinearity power $m$. The main
issue we address is whether solutions with fixed boundary and initial
data converge in some sense to the solution of the limit problem as
$m$ varies.  This kind of stability questions are not only a matter of
merely mathematical interest; in applications, parameters like $m$ are
often known only approximately, for instance from experiments.  Thus
it is natural to ask whether solutions are sensitive to small
variations in such parameters or not.

The equation \eqref{eq:eq-intro} is an important prototype of a
nonlinear diffusion equation.  For $m>1$, this is called the porous
medium equation (PME), and $m<1$, the fast diffusion equation (FDE).
The PME is degenerate, the diffusion being slow when $u$ is small.
The FDE is singular, and the opposite happens: the diffusion is fast
when $u$ is small. We do not exclude the case $m=1$, when we have the
ordinary heat equation. However, we do restrict our attention to the
supercritical range:
\begin{displaymath}
  m>m_c,\quad \text{where}\quad m_c=(n-2)_+/n.
\end{displaymath}
For the basic theory of the porous medium and fast diffusion
equations, we refer to the monographs \cite{DaskalopoulosKenig,
  VazquezBook2, VazquezBook} and the references therein.

It turns out that stability with respect to $m$ for
\eqref{eq:eq-intro} with convergence in an $L^p$ space can be
established in a relatively elementary manner. The reason is that weak
solutions to \eqref{eq:eq-intro} are defined in terms of the function
$u^m$ instead of $u$. This means that $u^m$ and its gradient are
always $L^2$ functions, even if $m$ varies.  The starting point of our
argument is a compactness property of weak solutions: locally
uniformly bounded sequences of weak solutions contain pointwise almost
everywhere convergent subsequences.  With the compactness result in
hand, stability for a particular problem follows by verifying the
local uniform boundedness and that the correct initial or boundary
values are attained. Only some fairly simple estimates are needed for
the second step.  More spesifically, we do not need H\"older
continuity, Harnack's inequality, or reverse H\"older inequalities for
the gradient.  We carry out the latter step in detail in two cases:
for Dirichlet problems with nonzero boundary values on bounded
domains, and for Cauchy problems on the whole space, with initial data
a positive measure of finite mass.

The previous result closest to ours is that of B\'enilan and
Crandall~\cite{BenilanCrandall}. They prove the stability of mild
solutions to
\begin{equation}\label{eq:general-phi}
  \partial_t u-\Delta \varphi(u)=0
\end{equation}
with respect to $\varphi$ using the theory of nonlinear
semigroups. Here $\varphi$ can be a maximal monotone graph; in the
case of a power function, the assumption used in
\cite{BenilanCrandall} reduces to $m\geq m_c$.  Our approach is
different from that of \cite{BenilanCrandall}, as we do not employ the
machinery of nonlinear semigroups.  

Explicit estimates for the difference of two solutions to the Cauchy
problem can be established by employing Kru\v{z}kov's ``doubling of
variables'' technique, see \cite{ChenKarlsen, CG, KarlsenRisebro}.
However, these estimates hold only under some restrictions; the
initial values need to be bounded, and these results apply only in the
degenerate case $m\geq 1$. See also \cite{PanApprox, PanGang} for some
estimates in the onedimensional situation.

Our result on the Cauchy problem applies with initial data a measure
with finite mass. An example of such a situation is provided by the
celebrated Barenblatt solutions \cite{Barenblatt, ZeldovichKompaneets}.
For $m>1$, it is given by
\begin{equation}\label{eq:barenblatt-pme}
  \B_m(x,t)=
  \begin{cases}
    t^{-\lambda}\left(C-\frac{\lambda(m-1)}{2mn}
      \frac{\abs{x}^2}{t^{2\lambda/n}}\right)_+^{1/(m-1)}, & t>0,\\
    0, & t\leq 0,
  \end{cases}
\end{equation}
where
\begin{displaymath}
  \lambda=\frac{n}{n(m-1)+2}.
\end{displaymath}
For $m_c<m<1$, it is convenient to write the formula as
\begin{equation}\label{eq:barenblatt-fde}
  \B_m(x,t)=
  \begin{cases}
    t^{-\lambda}\left(C+k
      \frac{\abs{x}^2}{t^{2\lambda/n}}\right)^{-1/(1-m)}, & t>0,\\
    0, & t\leq 0,
  \end{cases}
\end{equation}
where
\begin{displaymath}
  k=\frac{\lambda(1-m)}{2mn}.
\end{displaymath}
Note that $\lambda$ and $k$ are strictly positive, since here
$m_c<m<1$.  If one chooses the constant $C$ so that the normalization
\begin{displaymath}
  \int_{\Omega}\B_m(x,t)\dif x=1
\end{displaymath}
holds for all $t>0$, the initial trace of $\B_m$ is the Dirac measure
at the origin. Hence our results imply that
\begin{displaymath}
  \B_{m_i}\to \B_m \quad\text{ as }\quad m_i\to m
\end{displaymath}
in an $L^p$ space. As far as we know, the previous stability results
do not contain this fact.

Our results include the case when the limiting problem is the heat
equation. In this situation, we do not need the restrictions $m\geq 1$
or $m\leq 1$ on the approximating problems; both degenerate ($m>1$)
and singular ($m<1$) problems are allowed.  However, the restriction
$m>m_c$ seems essential, as a number of the tools we use are known to
fail when $0<m\leq m_c$. For instance, local weak solutions might no
longer be locally bounded, and the $L^1$-$L^\infty$ smoothing effect
for the Cauchy problem fails. See \cite{CF1, Sacks} for case when
$m\to \infty$, and \cite{DGL, Hui, Hui2} for the case $m\to 0$.

Generalizing our argument to other boundary and initial conditions is
straightforward. For instance, Neumann boundary conditions and Cauchy
problems with growing initial data can be handled
similarly. Generalization to nonlinearities other than powers, as in
\eqref{eq:general-phi}, should also be possible, albeit less
straightforward.

We also provide an alternative argument for the stability of Dirichlet
problems in bounded domains in the case $m\geq 1$. Compared to the
previous argument, the advantages of this approach are the fact that
local boundedness is not needed, and that one can in addition estimate
the difference of two solutions in terms of the difference of the
respective nonlinearity powers. The disadvantages are the restriction
$m\geq 1$ and the fact that the proof seems less amenable to
generalizations, since we employ strong monotonicity. See
\cite{PanApprox, PanGang} for similar estimates in the onedimensional
case.

Our proofs illustrate the differences between the the $p$-parabolic
equation
\begin{equation}\label{eq:p-para}
  \partial_t u-\dive(\abs{\nabla u}^{p-2}\nabla u)=0
\end{equation}
and the equation \eqref{eq:eq-intro}.  For \eqref{eq:p-para}, changing
the exponent $p$ also changes the space to which weak solutions
belong. Dealing with this in the case of \eqref{eq:p-para} is quite
delicate, and one needs the fact that the gradient satisfies a reverse
H\"older inequality. The equation \eqref{eq:eq-intro} has the
remarkable advantage that we may work in a fixed space, and our
results do not rely on the more sophisticated tools of regularity
theory.  The case of \eqref{eq:p-para} is detailed in
\cite{ParaStab}. See also \cite{LqvistStab} equations similar to the
$p$-Laplacian, \cite{LiMartio} for obstacle problems,
\cite{LqvistRayleigh} for eigenvalue problems, and \cite{ABKO} for
triply nonlinear equations.

The paper is organized as follows. In Section \ref{sec:weak}, we
recall the necessary background material, in particular the definition
of weak solutions. Section \ref{sec:local-stability} contains the
proof of the compactness theorem for locally bounded sequences of weak
solutions. The actual stability results are then established in
Sections \ref{sec:stab-dirichlet} and \ref{sec:stab-cauchy}, for
Dirichlet problems in the former and Cauchy problems in the latter.
We finish by presenting the alternative proof for Dirichlet problems
in Section \ref{sec:alt-stab}.

\section{Weak solutions}
\label{sec:weak}

Let $\Omega$ be an open subset of $\R^n$, and let $0<t_1<t_2<T$. We
use the notation $\Omega_T=\Omega\times(0,T)$ and $U_{t_1,t_2}=U\times
(t_1,t_2)$, where $U\subset\Omega$ is open. The parabolic boundary
$\partial_p U_{t_1,t_2}$ of a space-time cylinder $U_{t_1,t_2}$
consists of the initial and lateral boundaries, i.e.
\begin{displaymath}
  \partial_p U_{t_1,t_2}=(\overline{U}\times\{t_1\})
  \cup (\partial U\times [t_1,t_2]).
\end{displaymath}
The notation $U_{t_1,t_2}\Subset\Omega_T$ means that the closure
$\overline{U_{t_1,t_2}}$ is compact and
$\overline{U_{t_1,t_2}}\subset\Omega_T$.

We use $H^1(\Omega)$ to denote the usual Sobolev space, the space of
functions $u$ in $L^2(\Omega)$ such that the weak gradient exists and
also belongs to $L^2(\Omega)$. The norm of $H^1(\Omega)$ is
\begin{displaymath}
  \norm{u}_{H^1(\Omega)}=\norm{u}_{L^2(\Omega)}+\norm{\nabla u}_{L^2(\Omega)}.
\end{displaymath}
The Sobolev space with zero boundary values, denoted by
$H^{1}_0(\Omega)$, is the completion of $C^{\infty}_0(\Omega)$ with
respect to the norm of $H^1(\Omega)$. The dual of $H^1_0(\Omega)$ is
denoted by $H^{-1}(\Omega)$.

The parabolic Sobolev space $L^2(0,T;H^1(\Omega))$ consists of
measurable functions $u:\Omega_T\to[-\infty,\infty]$ such that
$x\mapsto u(x,t)$ belongs to $H^1(\Omega)$ for almost all
$t\in(0,T)$, and 
\begin{displaymath}
  \int_{\Omega_T}\abs{u}^2+\abs{\nabla u}^2\dif x\dif t<\infty.
\end{displaymath}
The definition of $L^2(0,T;H^{1}_0(\Omega))$ is identical, apart from
the requirement that $x\mapsto u(x,t)$ belongs to $H^{1}_0(\Omega)$.
We say that $u$ belongs to $L^2_{loc}(0,T;H^{1}_{loc}(\Omega))$ if
$u\in L^2(t_1,t_2;H^1(U))$ for all $U_{t_1,t_2}\Subset\Omega_T$.

We use the following Sobolev inequality. See \cite[Proposition 3.1,
p. 7]{DiBenedettoBook} for the proof.
\begin{lemma}\label{lem:sobolev}
  Let $u$ be a function in $L^2(0,T;H^{1}_0(\Omega))$.
  Then we have
  \begin{equation}\label{eq:sobolev-further}
    \int_{\Omega_T} \abs{u}^{2\kappa }\dif x\dif t\leq 
    C\int_{\Omega_T}\abs{\nabla u}^2\dif x\dif
    t\left(\esssup_{0<t<T}
      \int_\Omega u^{1+1/m}\dif x\right)^{2/n},
  \end{equation}
  where 
  \begin{equation}\label{eq:kappa}
    \kappa=1+\frac{1}{n}+\frac{1}{mn}.
  \end{equation}
\end{lemma}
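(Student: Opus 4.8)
The plan is to derive $(\ref{eq:sobolev-further})$ by interpolating between the spatial Sobolev embedding $H^1_0(\Omega)\hookrightarrow L^{2^*}(\Omega)$ (with $2^*=2n/(n-2)$ when $n\geq 3$) and the $L^\infty$-in-time control of the $L^{1+1/m}$ norm, then integrating in $t$. First I would fix a time slice $t$ and set $v(x)=u(x,t)$, which lies in $H^1_0(\Omega)$ for a.e. $t$. The exponent $q:=1+1/m$ will play the role of the "low" Lebesgue exponent, and $2^*$ the "high" one; I would write $2\kappa$ as a convex combination $\tfrac{1}{2\kappa}=\tfrac{\theta}{q}+\tfrac{1-\theta}{2^*}$ and solve for $\theta$. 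A short computation with $\kappa=1+\tfrac1n+\tfrac1{mn}$ should give exactly $1-\theta=\tfrac{2}{n}\cdot\tfrac{\kappa}{2\kappa}$, i.e. the power $2/n$ that appears on the $\esssup$ factor; this bookkeeping is what makes the stated exponents match.

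Next, for each fixed $t$ I would apply H\"older's inequality in the form
\begin{displaymath}
  \norm{v}_{L^{2\kappa}(\Omega)}\leq \norm{v}_{L^q(\Omega)}^{\theta}\,\norm{v}_{L^{2^*}(\Omega)}^{1-\theta},
\end{displaymath}
raise to the power $2\kappa$, and then bound $\norm{v}_{L^{2^*}(\Omega)}$ by $C\norm{\nabla v}_{L^2(\Omega)}$ using the Sobolev–Gagliardo–Nirenberg inequality on $\Omega$ (valid since $v\in H^1_0(\Omega)$; the constant $C$ depends only on $n$). This yields a pointwise-in-$t$ bound of the shape
\begin{displaymath}
  \int_\Omega \abs{v}^{2\kappa}\dif x
  \leq C\Big(\int_\Omega \abs{\nabla v}^2\dif x\Big)
  \Big(\int_\Omega \abs{v}^{q}\dif x\Big)^{2/n},
\end{displaymath}
where the exponent on the gradient factor is $2\kappa(1-\theta)/2=1$ and the exponent on the low-norm factor is $2\kappa\theta/q=2/n$ — again this is where the arithmetic has to be checked carefully. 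Then I would bound the last factor by its essential supremum over $t$, pull that constant out, and integrate the remaining inequality over $t\in(0,T)$, which leaves $\int_{\Omega_T}\abs{\nabla u}^2$ on the right. The case $n=1,2$ needs the usual modification: replace $2^*$ by any large finite exponent (or use the borderline embedding), and the constant then also depends on $T$ or $\abs{\Omega}$ in the low-dimensional case; since the paper cites DiBenedetto for the precise statement, I would simply note this and refer there.

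The main obstacle is purely the exponent algebra: one must verify that the interpolation parameter $\theta$ defined by matching $L^{2\kappa}$ between $L^{1+1/m}$ and $L^{2n/(n-2)}$ produces precisely the gradient exponent $1$ (no leftover power) and the low-norm exponent $2/n$, simultaneously and for all admissible $m>m_c$. This is exactly the point where the hypothesis $m>m_c=(n-2)_+/n$ enters: it guarantees $q=1+1/m$ is large enough that the target exponent $2\kappa$ lies strictly between $q$ and $2^*$, so $\theta\in(0,1)$ and the interpolation is legitimate. Once that identity is pinned down the rest is a one-line application of H\"older and Fubini.
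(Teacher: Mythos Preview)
Your proposal is correct and is precisely the standard argument: split $|u|^{2\kappa}=|u|^{2}\cdot|u|^{2(1+1/m)/n}$, apply H\"older on each time slice with exponents $n/(n-2)$ and $n/2$, use the Sobolev embedding $H^1_0\hookrightarrow L^{2^*}$, then take the $\esssup$ in $t$ and integrate. The paper does not give its own proof but simply refers to \cite[Proposition~3.1]{DiBenedettoBook}, where exactly this computation appears, so there is nothing to compare.

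One small correction: the hypothesis $m>m_c$ plays no role in this lemma. The interpolation parameter $\theta$ you define always lies in $(0,1)$ (one checks that $2\kappa$ automatically lies between $1+1/m$ and $2^*$ for every $m>0$ when $n\geq 3$), so the inequality \eqref{eq:sobolev-further} holds without any restriction on $m$; the supercriticality assumption is needed elsewhere in the paper, not here.
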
  

Solutions are defined in the weak sense in the parabolic Sobolev
space.
\begin{definition}\label{def:local-weak}
  Assume that $m> m_c$.  A nonnegative function $u:\Omega_T\to\R$ is a
  local weak solution of the equation
  \begin{equation}\label{eq:pme}
    \frac{\partial u}{\partial t}-\Delta u^m=0
  \end{equation}
  in $\Omega_T$, if $u^m\in
  L^2_{loc}(0,T;H^{1}_{loc}(\Omega))$ and
  \begin{equation}\label{eq:weak-pme}
    \int_{\Omega_T}-u\frac{\partial\varphi}{\partial t}
    +\nabla u^m\cdot\nabla\varphi\dif x\dif t=0
  \end{equation}
  for all smooth test functions $\varphi$ compactly supported in
  $\Omega_T$.   
\end{definition}
We will always assume that $m> m_c$, and consider only nonnegative
solutions.  We refer to the monographs \cite{DaskalopoulosKenig,
  VazquezBook2, VazquezBook} for the basic theory related to this type
of equations, and numerous further references.  In particular, weak
solutions have a locally H\"older continuous representative, see
\cite{DahlbergKenig,  DiBenedettoFriedman} or Chapter 7 of
\cite{VazquezBook}; however, we do not use this fact.

An important example of a local weak solution in the sense of
Definition \ref{def:local-weak} is provided by the celebrated
Barenblatt solutions \cite{Barenblatt, ZeldovichKompaneets}, given by
\eqref{eq:barenblatt-pme} and \eqref{eq:barenblatt-fde}.  Observe that
for $m>1$, $\B_m$ has compact support in space at each time instant,
while for $m<1$, $\B_m$ has a powerlike tail. In both cases, $\B_m$ is
a local weak solution in $\R^n\times(0,\infty)$.  See Section
\ref{sec:stab-cauchy} for the trace of $\B_m$ at the initial time
$t=0$.

The definition of weak solutions and supersolutions does not include a
time derivative of $u$. However, we would like to use test functions
depending on $u$, and thus the time derivative $\frac{\partial
  u}{\partial t} $ inevitably appears. To deal with this defect, a
mollification procedure in the time direction, for instance Steklov
averages or convolution with the standard mollifier, is usually
employed. The mollification
\begin{equation}\label{eq:naumann-conv}
  u^\ast(x,t)=\frac{1}{\sigma}\int_0^t e^{(s-t)/\sigma}u(x,s)\dif s
\end{equation}
is convenient.  The aim is to obtain estimates independent of the time
derivative of $u^\ast$, and then pass to the limit $\sigma\to 0$.

The basic properties of the mollification \eqref{eq:naumann-conv} are
given in the following lemma, see \cite{Naumann}.
\begin{lemma}\label{lem:conv-prop}
  \begin{enumerate}
  \item If $u\in L^p(\Omega_T)$, then
    \begin{displaymath}
      \norm{u^\ast}_{L^p(\Omega_T)}\leq\norm{u}_{L^p(\Omega_T)},
    \end{displaymath}
    \begin{equation}\label{eq:naumann-timederiv}
      \frac{\partial u^\ast}{\partial t}=\frac{u-u^\ast}{\sigma},
    \end{equation}
    and $u^\ast\to u$ in $L^p(\Omega_T)$ as $\sigma\to 0$.
  \item If $\nabla u\in L^p(\Omega_T)$, then $\nabla(u^\ast)=(\nabla
    u)^\ast$,
    \begin{displaymath}
      \norm{\nabla u^\ast}_{L^p(\Omega_T)}\leq\norm{\nabla u}_{L^p(\Omega_T)},
    \end{displaymath}
    and $\nabla u^\ast\to \nabla u$ in $L^p(\Omega_T)$ as $\sigma \to
    0$.
  \item If $u_k\to u$ in $L^p(\Omega_T)$, then also
    \begin{displaymath}
      u^\ast_k\to u^\ast \text{ and }\frac{\partial u^\ast_k}{\partial
        t}\to \frac{\partial u^\ast}{\partial  t}
    \end{displaymath}
    in $L^p(\Omega_T)$.
  \item If $\nabla u_k\to \nabla u$ in $L^p(\Omega_T)$, then $\nabla
    u^\ast_k\to \nabla u^\ast$ in $L^p(\Omega_T)$.

  \item Similar results hold for weak convergence in $L^p(\Omega_T)$.

  \item If $\varphi\in C(\overline{\Omega_T})$, then
    \begin{displaymath}
      \varphi^\ast(x,t)+e^{-t/\sigma}\varphi(x,0)\to \varphi(x,t)
    \end{displaymath}
    uniformly in $\Omega_T$ as $\sigma \to 0$.
  \end{enumerate}
\end{lemma}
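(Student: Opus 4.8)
The plan is to work directly from the explicit formula \eqref{eq:naumann-conv}, using the substitution $s=t-\sigma\tau$ to rewrite it as
\begin{equation*}
  u^\ast(x,t)=\int_0^{t/\sigma}e^{-\tau}u(x,t-\sigma\tau)\dif\tau,
\end{equation*}
which exhibits $u^\ast(x,t)$ as an average of backward time translates of $u$ against the density $e^{-\tau}\dif\tau$, truncated at $\tau=t/\sigma$ so that only the values $u(x,s)$, $0<s<t$, enter. Equivalently, $u^\ast(x,t)=\int_0^t w_t(s)u(x,s)\dif s$ with $w_t(s)=\sigma^{-1}e^{(s-t)/\sigma}$ and $\int_0^t w_t(s)\dif s=1-e^{-t/\sigma}\leq 1$, so $u^\ast(x,t)$ is a sub-probability average of $\{u(x,s):0<s<t\}$.

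For part (1), the norm bound comes from Jensen's inequality, $\abs{u^\ast(x,t)}^p\leq\int_0^tw_t(s)\abs{u(x,s)}^p\dif s$, followed by Fubini's theorem: interchanging the $s$- and $t$-integrations introduces the factor $\sigma^{-1}\int_s^Te^{(s-t)/\sigma}\dif t=1-e^{(s-T)/\sigma}\leq 1$, which yields $\norm{u^\ast}_{L^p(\Omega_T)}\leq\norm{u}_{L^p(\Omega_T)}$. The identity \eqref{eq:naumann-timederiv} follows by writing $u^\ast(x,t)=\sigma^{-1}e^{-t/\sigma}\int_0^te^{s/\sigma}u(x,s)\dif s$ and differentiating the product in $t$. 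For $u^\ast\to u$ in $L^p(\Omega_T)$ I would first take $\varphi$ continuous with compact support in $\Omega_T$: in the change-of-variables formula the integrand is bounded by $\norm{\varphi}_{L^\infty}$ and supported in a fixed set of finite measure, so dominated convergence gives $\varphi^\ast(x,t)\to\varphi(x,t)$ for every $t>0$, hence in $L^p$; the general case follows by density together with the uniform bound $\norm{u^\ast-\varphi^\ast}_{L^p(\Omega_T)}\leq\norm{u-\varphi}_{L^p(\Omega_T)}$.

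Part (2) is immediate, since the kernel in \eqref{eq:naumann-conv} does not depend on $x$: differentiating under the integral sign (justified by Fubini for the weak gradient) gives $\nabla(u^\ast)=(\nabla u)^\ast$, and the two remaining assertions are part (1) applied to $\nabla u$. Parts (3) and (4) use linearity, $u_k^\ast-u^\ast=(u_k-u)^\ast$: the $L^p$ bounds of (1)--(2) control $u_k^\ast-u^\ast$ and $\nabla u_k^\ast-\nabla u^\ast$, while \eqref{eq:naumann-timederiv} turns $L^p$-convergence of $u_k$ into $L^p$-convergence of $\partial_t u_k^\ast$ for fixed $\sigma$. For part (5), Fubini gives the duality identity $\int_{\Omega_T}u^\ast g\dif x\dif t=\int_{\Omega_T}u\,\tilde g\dif x\dif t$ with $\tilde g(x,s)=\sigma^{-1}\int_s^Te^{(s-t)/\sigma}g(x,t)\dif t$; since $\tilde g$ obeys the same type of $L^{p'}$ bound, testing this identity against $\tilde g$ carries weak convergence through $u\mapsto u^\ast$, and the gradient and time-derivative versions follow as in (3)--(4).

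The step requiring the most care is part (6). Using the change of variables and $\int_{t/\sigma}^\infty e^{-\tau}\dif\tau=e^{-t/\sigma}$, one writes
\begin{multline*}
  \varphi^\ast(x,t)+e^{-t/\sigma}\varphi(x,0)-\varphi(x,t) \\
  =\int_0^{t/\sigma}e^{-\tau}\bigl(\varphi(x,t-\sigma\tau)-\varphi(x,t)\bigr)\dif\tau \\
  +\int_{t/\sigma}^\infty e^{-\tau}\bigl(\varphi(x,0)-\varphi(x,t)\bigr)\dif\tau.
\end{multline*}
The first integral is made small by uniform continuity of $\varphi$ once $\sigma\tau$ is small, the tail $\tau\geq\eta/\sigma$ contributing at most $2\norm{\varphi}_{L^\infty}e^{-\eta/\sigma}$; in the second integral one distinguishes $t<\eta$, where $\abs{\varphi(x,0)-\varphi(x,t)}$ is small, from $t\geq\eta$, where $e^{-t/\sigma}\leq e^{-\eta/\sigma}$ is small. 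This is precisely the role of the correction term $e^{-t/\sigma}\varphi(x,0)$: it repairs the discontinuity at $t=0$ forced by the lower limit $0$ in \eqref{eq:naumann-conv}. All the bounds are uniform in $(x,t)$, so the convergence is uniform. I expect this endpoint bookkeeping to be the only genuinely delicate point; everything else reduces to Jensen's inequality, Fubini's theorem, and a routine density argument.
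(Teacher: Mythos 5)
The paper does not prove this lemma; it cites Naumann's monograph and moves on. Your self-contained proof is correct and is the standard argument one would expect to find there: the substitution $s=t-\sigma\tau$ exposes $u^\ast$ as a truncated exponential average, Jensen/H\"older with the sub-probability weight (absorbing the $(\int w_t)^{p-1}\leq 1$ factor) plus Fubini give the $L^p$ contraction, linearity and duality give parts (3)--(5), and the split $\tau\gtrless\eta/\sigma$, $t\gtrless\eta$ in part (6) correctly shows that the term $e^{-t/\sigma}\varphi(x,0)$ cancels the boundary deficit so that the convergence is uniform. One minor point worth stating explicitly in part (1): for $\varphi$ with compact support in $\Omega_T$, the function $\varphi^\ast$ has the same $x$-support, so $|\varphi^\ast-\varphi|\leq 2\norm{\varphi}_\infty\mathbf{1}_{K\times(0,T)}$ supplies the $L^p$ dominating function needed to upgrade the pointwise limit to an $L^p$ limit before invoking density.
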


We use the following estimate for the local version of our stability
result. See \cite[Lemma 2.15]{KinnunenLindqvist2} or \cite[Lemma
2.9]{FDE-meas} for the proof.

\begin{lemma}\label{lem:caccioppoli}
  Let $u$ be a weak solution such that $0\leq u\leq M<\infty$, where
  $M\geq 1$, and let $\eta$ be any nonnegative function in
  $C^{\infty}_0(\Omega)$. Then
  \begin{displaymath}
    \int_{\Omega_T}\eta^2\abs{\nabla u^m}^2\dif x\dif t\leq
    2M^{m+1}\int_\Omega\eta^2\dif x
    +16M^{2m}\int_{\Omega_T}\abs{\nabla \eta}^2\dif x\dif t.
  \end{displaymath}
\end{lemma}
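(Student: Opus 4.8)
The plan is the standard energy estimate: formally test equation \eqref{eq:pme} with $\varphi=\eta^2 u^m$. Since the weak formulation \eqref{eq:weak-pme} contains no time derivative of $u$, I would first mollify in time. Choosing $\varphi(x,t)=\zeta(x)\theta(t)$ in \eqref{eq:weak-pme} shows that $t\mapsto\int_\Omega u(\cdot,t)\zeta\dif x$ has distributional derivative $-\int_\Omega\nabla u^m\cdot\nabla\zeta\dif x$; mollifying this and using $\partial_t u^\ast=(u-u^\ast)/\sigma$ and $\nabla(u^\ast)=(\nabla u)^\ast$ from Lemma \ref{lem:conv-prop} yields, for a.e. $t$, the identity
\begin{displaymath}
  \int_\Omega\partial_t u^\ast\,\zeta+\nabla(u^m)^\ast\cdot\nabla\zeta\dif x=0,
\end{displaymath}
valid for $\zeta$ compactly supported in $\Omega$. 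Take $\zeta=\eta^2(u^m)^\ast(\cdot,t)$ --- admissible, since $\eta\in C_0^\infty(\Omega)$ and $(u^m)^\ast$ vanishes at $t=0$ --- and integrate over $(0,T)$. Letting $\sigma\to0$ at the end, using $(u^m)^\ast\to u^m$ and $\nabla(u^m)^\ast\to\nabla u^m$ in $L^2_{loc}$ and weak lower semicontinuity of $w\mapsto\int_{\Omega_T}\eta^2\abs{\nabla w}^2$, recovers $\int_{\Omega_T}\eta^2\abs{\nabla u^m}^2$ on the left-hand side.

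For the diffusion term, expand
\begin{displaymath}
  \nabla(u^m)^\ast\cdot\nabla\bigl(\eta^2(u^m)^\ast\bigr)
  =\eta^2\abs{\nabla(u^m)^\ast}^2+2\eta(u^m)^\ast\,\nabla\eta\cdot\nabla(u^m)^\ast
\end{displaymath}
and absorb the cross term by Young's inequality, keeping a fixed fraction of $\eta^2\abs{\nabla(u^m)^\ast}^2$ on the favourable side. Since $0\le u\le M$ and $r\mapsto r^{2m}$ is increasing on $[0,\infty)$, after $\sigma\to0$ one has $u^{2m}\le M^{2m}$; a slightly wasteful choice of the constant in Young's inequality (together with the mollification error terms, which vanish) produces the term $16M^{2m}\int_{\Omega_T}\abs{\nabla\eta}^2$.

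The genuinely delicate point is the parabolic term. Formally $-u\,\partial_t(\eta^2 u^m)=-\frac{m}{m+1}\eta^2\partial_t(u^{m+1})$, so integrating in time would leave $\frac{m}{m+1}\bigl(\int_\Omega\eta^2 u^{m+1}(\cdot,0)-\int_\Omega\eta^2 u^{m+1}(\cdot,T)\bigr)\le M^{m+1}\int_\Omega\eta^2$. To make this rigorous at the level of the mollification, I would bound $\int_{\Omega_T}\eta^2\,\partial_t u^\ast\,(u^m)^\ast\dif x\dif t$ from below: writing $\partial_t u^\ast=(u-u^\ast)/\sigma$ and exploiting that $r\mapsto r^m$ is increasing while its primitive $\Phi(s)=\int_0^s\tau^m\dif\tau=\frac{s^{m+1}}{m+1}$ is convex, one gets $\eta^2(u-u^\ast)(u^m)^\ast/\sigma\ge\partial_t\bigl(\eta^2\Phi(u^\ast)\bigr)$ up to terms that vanish as $\sigma\to0$. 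Since $u^\ast(\cdot,0)=0$ the initial boundary term is absent, the terminal term at $t=T$ has the favourable sign, and $u^\ast\le M$ leaves at most $2M^{m+1}\int_\Omega\eta^2$.

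This convexity/monotonicity manipulation --- replacing the formal identity $u\,\partial_t u^m=\frac{m}{m+1}\partial_t u^{m+1}$ by a correct inequality for the mollifications --- is the step I expect to be the main obstacle; it is precisely the computation carried out in \cite[Lemma 2.15]{KinnunenLindqvist2} and \cite[Lemma 2.9]{FDE-meas}. Combining the diffusion and time estimates and rearranging --- the good term survives with a positive coefficient, which is cleared into the stated constants $2$ and $16$ --- gives the claimed inequality.
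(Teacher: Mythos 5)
The paper itself does not prove Lemma~\ref{lem:caccioppoli}; it refers the reader to \cite[Lemma~2.15]{KinnunenLindqvist2} and \cite[Lemma~2.9]{FDE-meas}. So what can be assessed is the internal correctness of your sketch, which follows the same energy-estimate route as those references but contains two genuine gaps.

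The first concerns the choice of test function. Taking $\zeta=\eta^2(u^m)^\ast$ makes the parabolic term $\int_{\Omega_T}\eta^2\,\partial_t u^\ast\,(u^m)^\ast$, and your claim that this dominates $\int_{\Omega_T}\eta^2\,\partial_t\Phi(u^\ast)$ ``up to terms that vanish as $\sigma\to0$'' is not supported by the monotonicity/convexity observation you invoke. The discrepancy is
\begin{displaymath}
\int_{\Omega_T}\eta^2\,\partial_t u^\ast\bigl[(u^m)^\ast-(u^\ast)^m\bigr]\dif x\dif t
=-\sigma\int_{\Omega_T}\eta^2\,\partial_t u^\ast\,\partial_t(u^m)^\ast\dif x\dif t,
\end{displaymath}
and neither factorization shows a sign: $u-u^\ast$ is of uncontrolled sign, and $(u^m)^\ast-(u^\ast)^m$ has a sign only via Jensen (and that sign flips at $m=1$). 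The factor $\sigma$ in front does not help either, since a general weak solution provides no $L^2$ bound on $\partial_t u^\ast$ that is uniform in $\sigma$. The standard fix, and presumably what the cited lemmas do, is to test with $\zeta=\eta^2 u^m$ \emph{without} mollifying the test function. Then the pointwise inequality
\begin{displaymath}
u^m\,\partial_t u^\ast=\frac{u^m(u-u^\ast)}{\sigma}\geq\frac{(u^\ast)^m(u-u^\ast)}{\sigma}
=\partial_t\Phi(u^\ast),
\end{displaymath}
which is nothing but the monotonicity of $r\mapsto r^m$, holds with no error terms, and letting $\sigma\to0$ in the diffusion part is unproblematic because $\nabla(u^m)^\ast\to\nabla u^m$ in $L^2_{loc}$.

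The second gap is in the regularized identity itself. The relation $\partial_t f^\ast=(f-f^\ast)/\sigma$ and the distributional identity $f'=g$ for $f(t)=\int_\Omega u\zeta$, $g(t)=-\int_\Omega\nabla u^m\cdot\nabla\zeta$, together yield $\partial_t f^\ast=g^\ast+\sigma^{-1}e^{-t/\sigma}f(0^+)$, so the correct mollified equation is
\begin{displaymath}
\int_\Omega\partial_t u^\ast\,\zeta+\nabla(u^m)^\ast\cdot\nabla\zeta\dif x
=\frac{1}{\sigma}e^{-t/\sigma}\int_\Omega u(\cdot,0^+)\zeta\dif x,
\end{displaymath}
not the homogeneous one you wrote (equivalently, working on $(\tau,T)$ and sending $\tau\to0$ leaves an initial-time boundary contribution). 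It is precisely this term, bounded using $u\leq M$ and $\int_0^T\sigma^{-1}e^{-t/\sigma}\dif t\leq1$, that produces the $2M^{m+1}\int_\Omega\eta^2$ piece of the statement. Your accounting ascribes that constant to the terminal quantity $\int_\Omega\eta^2\Phi(u^\ast(\cdot,T))$, but that term enters with the favourable sign and is simply dropped; it cannot be the source of $2M^{m+1}\int\eta^2$. With these two corrections --- testing with $\eta^2 u^m$ and keeping the initial-time contribution in the regularized equation --- the rest of your sketch (Young's inequality on the cross term, $u\leq M$ to clear constants) goes through and yields an estimate of the stated form.
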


We use the following elementary lemma to pass pointwise convergences
between various powers.
\begin{lemma}\label{lem:conv-of-powers}
  Let $(f_i)$ be a sequence of positive functions on a measurable set
  $E$ with finite measure such that
  \begin{displaymath}
    f_i\to f \quad \text{in }L^1(E)\text{and pointwise almost everywhere}.
  \end{displaymath}
  Assume that $\alpha_i\to \alpha$ as $i\to \infty$. Then
  \begin{displaymath}
    f^{\alpha_i}_i\to f^\alpha
  \end{displaymath}
  pointwise almost everywhere.
\end{lemma}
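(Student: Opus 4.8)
The plan is to reduce the statement to a pointwise fact about sequences of positive real numbers, namely the joint continuity of the power map $(s,\beta)\mapsto s^\beta$ on $(0,\infty)\times\R$. First I would extract from the hypotheses exactly what is needed at a single point. Since $f_i\to f$ in $L^1(E)$, the limit $f$ lies in $L^1(E)$ and is therefore finite almost everywhere; since each $f_i$ is positive, the pointwise limit satisfies $f\geq 0$ almost everywhere. Discarding a set of measure zero, I may thus assume that the point $x$ under consideration satisfies both $f_i(x)\to f(x)$ and $0\leq f(x)<\infty$.

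Fix such an $x$ with $f(x)>0$. Writing $f_i(x)^{\alpha_i}=\exp\bigl(\alpha_i\log f_i(x)\bigr)$ and using that $\log$ is continuous at $f(x)\in(0,\infty)$, we get $\log f_i(x)\to\log f(x)$, whence $\alpha_i\log f_i(x)\to\alpha\log f(x)$ because $\alpha_i\to\alpha$; continuity of $\exp$ then yields $f_i(x)^{\alpha_i}\to f(x)^\alpha$. Hence the conclusion holds at every such point, and this disposes of almost every $x$ as soon as one knows $f>0$ a.e.

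The only points needing separate attention are those where $f(x)=0$. For these I would use that $f_i(x)\to 0$, so that $0<f_i(x)<1$ for all large $i$; if $\alpha>0$ then also $\alpha_i\geq\alpha/2>0$ eventually, and the monotonicity of $\beta\mapsto s^\beta$ for $0<s<1$ gives $0<f_i(x)^{\alpha_i}\leq f_i(x)^{\alpha/2}\to 0=f(x)^\alpha$, as desired. I expect this boundary case $s=0$ to be the only place where the argument is not completely automatic; in all our applications the exponents $\alpha_i$ stay bounded away from $0$, so no additional difficulty arises, and the proof is a short continuity argument that uses the $L^1$ convergence only to ensure that $f$ is finite almost everywhere.
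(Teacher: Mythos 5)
Your argument is correct and follows the same overall strategy as the paper: split into the set $\{f>0\}$, handled by a pointwise continuity argument for the map $(s,\beta)\mapsto s^\beta$, and the set $\{f=0\}$, handled separately under the (implicit) standing assumption $\alpha>0$ that holds in every application in the paper, with the $L^1$ convergence used only to guarantee $f<\infty$ a.e. The one genuine difference is that the paper first invokes Egorov's theorem to pass to a set where $f_i\to f$ uniformly, and then applies the mean value theorem to $\beta\mapsto t^\beta$ to get a quantitative bound $\abs{f_i(x)^{\alpha_i}-f_i(x)^\alpha}\leq c(\delta,M)\abs{\alpha_i-\alpha}$ on a set where $\delta\le f\le M$. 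Your $\exp/\log$ continuity argument makes it clear that the Egorov step is actually dispensable: since the conclusion is only almost-everywhere pointwise, it suffices to argue at a fixed $x$ where $f_i(x)\to f(x)\in(0,\infty)$, and there the eventual two-sided bounds $\delta/2\le f_i(x)\le 2M$ already follow from the pointwise convergence at that single $x$. So your proof is a mild simplification of the paper's; the paper's MVT estimate would, however, be the version to reach for if one wanted a quantitative modulus of convergence on a large set rather than a mere a.e. statement.
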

\begin{proof}
  By Egorov's Theorem, for any $\varepsilon>0$ there is a set
  $F_\varepsilon$ such that $\abs{F_\varepsilon}<\varepsilon$ and
  $f_i\to f$ uniformly in $E\setminus F_\varepsilon$. Pick any point
  $x\in E\setminus F_\varepsilon$ such that 
  \begin{displaymath}
    0<\delta\leq f(x)\leq M<\infty.
  \end{displaymath}
  Then 
  \begin{displaymath}
    0<\delta/2\leq f_i(x)\leq 2M
  \end{displaymath}
  for all sufficiently large $i$ by uniform convergence. An
  application of the mean value theorem to the function $\alpha\mapsto
  t^\alpha$ gives
  \begin{align*}
    \abs{f_i(x)^{\alpha_i}-f(x)^\alpha}\leq & \abs{f_i(x)^{\alpha_i}-f_i(x)^\alpha}
    +\abs{f_i(x)^\alpha-f(x)^\alpha}\\
    \leq & c(\delta,M)\abs{\alpha_i-\alpha}+\abs{f_i(x)^\alpha-f(x)^\alpha}.
  \end{align*}
  By the convergence assumptions, it follows that
  $f_i(x)^{\alpha_i}\to f(x)^\alpha$ as $i\to\infty$.

  Since $\delta$, $M$, and $\varepsilon$ are arbitrary, the above
  implies that $f_i(x)^{\alpha_i}\to f(x)^\alpha$ for almost all $x$
  in the set $\{0<f(x)<\infty\}$. For points where $f(x)=0$, it is
  easy to check that $f_i(x)^{\alpha_i}\to 0$. Hence we have the
  desired convergence almost everywhere in the set $\{f(x)<\infty\}$,
  which is sufficient since $f$ is integrable.
\end{proof}

\section{Stability of local weak solutions}

\label{sec:local-stability}

In this section, we establish a local version of stability for a
bounded family of local weak solutions. This is beneficial since we
may then apply the same result to both initial--boundary value
problems in bounded domains and Cauchy problems on the whole space.
The crucial point in our stability results is extracting pointwise
convergent subsequences out of a sequence of solutions, in other
words, a compactness property of solutions.

Recall that we use the notation $U\Subset\Omega$ to mean that the
closure $\overline{U}$ is compact and contained in $\Omega$. In view
of applying this result to the Cauchy problem on $\R^n$, we allow the
cases $\Omega=\R^n$ and $T=\infty$.  We denote
\begin{displaymath}
  m^+=\sup_{i} m_i \quad \text{and}\quad m^-=\inf_{i} m_i,
\end{displaymath}
with similar notations for other exponents.

The main result of this section is the following theorem.
\begin{theorem}\label{thm:compactness}
  Let $m_i$, $i=1,2,3,\ldots,$ be exponents such that 
  \begin{equation}\label{eq:mconv}
    m_i\to m \quad \text{as} \quad i\to \infty 
    \quad \text{for some}\quad m>m_c=(n-2)_+/n
  \end{equation}
  Let $u_i$, $i=1,2,3,\ldots$, be positive local weak solutions to
  \begin{displaymath}
    \partial_t u_i-\Delta u_i^{m_i}=0
  \end{displaymath}
  in $\Omega_T$. Assume that we have the bound
  \begin{equation}\label{eq:assumptionM1}
    \norm{u_i}_{L^\infty(U_{t_1,t_2})}\leq M<\infty
  \end{equation}
  in all cylinders $U_{t_1,t_2}$ such that $U\Subset \Omega $ and
  $0<t_1 <t_2<T$.  
  
  Then there is a function $u$ such that $u_i\to u $ and $u^{m_i}\to
  u^m$ pointwise almost everywhere in $\Omega_T$, for a subsequence
  still indexed by $i$. Further, $u$ is a local weak solution to
  \begin{displaymath}
    \partial_t u-\Delta u^m=0.
  \end{displaymath}
\end{theorem}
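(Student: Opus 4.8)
The strategy is the standard compactness-via-Aubin--Lions route, adapted to the fact that the natural object with a gradient bound is $v_i := u_i^{m_i}$, not $u_i$ itself. I would proceed locally: fix a cylinder $U_{t_1,t_2}$ with $U\Subset\Omega$ and $0<t_1<t_2<T$, exhaust $\Omega_T$ by countably many such cylinders, and produce the limit and subsequence in each by a diagonal argument.

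First, \textbf{spatial compactness.} From \eqref{eq:assumptionM1} the sequence $v_i=u_i^{m_i}$ is bounded in $L^\infty(U_{t_1,t_2})$ (using $m^+<\infty$, which follows from $m_i\to m$). Lemma~\ref{lem:caccioppoli}, applied with a cutoff $\eta$ equal to $1$ on a slightly smaller cylinder, gives a uniform bound on $\int\eta^2|\nabla v_i|^2$, hence $v_i$ is bounded in $L^2(t_1,t_2;H^1(U'))$ for $U'\Subset U$. Second, \textbf{time regularity.} From the weak formulation \eqref{eq:weak-pme}, for test functions supported in $U'_{t_1,t_2}$ one reads off that $\partial_t u_i = \Delta v_i$ in the distributional sense, so $\partial_t u_i$ is bounded in $L^2(t_1,t_2;H^{-1}(U'))$. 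To run Aubin--Lions I want a time bound on $v_i$ rather than $u_i$; but $u_i$ and $v_i$ are linked by a uniformly bi-Lipschitz relation on the range $[0,M]$ once $m_i$ stays in a compact interval away from... — more robustly, I would argue as in the cited references: the bounds ($v_i$ bounded in $L^2H^1$, $u_i$ bounded in $L^\infty$, $\partial_t u_i$ bounded in $L^2H^{-1}$) together with the monotone relation between $u_i$ and $v_i$ yield, via a well-known compactness lemma for equations of the form $\partial_t b(v)-\Delta v=0$, that $u_i$ (equivalently $v_i$) is precompact in $L^1(U'_{t_1',t_2'})$. Extracting a subsequence, $u_i\to u$ in $L^1_{loc}(\Omega_T)$ and, passing to a further subsequence, pointwise a.e.

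Third, \textbf{convergence of the powers.} With $u_i\to u$ in $L^1_{loc}$ and pointwise a.e., and $m_i\to m$, Lemma~\ref{lem:conv-of-powers} gives $u_i^{m_i}\to u^m$ pointwise a.e.; since $0\le u_i^{m_i}\le \max(M^{m^+},M^{m^-},1)$ on each fixed subcylinder, dominated convergence upgrades this to $u_i^{m_i}\to u^m$ in $L^2_{loc}$. Combined with the uniform $L^2_{loc}H^1_{loc}$ bound on $u_i^{m_i}$, a standard weak-compactness argument identifies the weak $L^2_{loc}$ limit of $\nabla u_i^{m_i}$ as $\nabla u^m$, so in particular $u^m\in L^2_{loc}(0,T;H^1_{loc}(\Omega))$. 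Fourth, \textbf{passing to the limit in the equation.} Fix a test function $\varphi\in C_0^\infty(\Omega_T)$; its support lies in finitely many of the cylinders, so along the diagonal subsequence we may pass to the limit in $\int -u_i\,\partial_t\varphi + \nabla u_i^{m_i}\cdot\nabla\varphi$, using $u_i\to u$ in $L^1_{loc}$ (against $\partial_t\varphi\in L^\infty$) and $\nabla u_i^{m_i}\rightharpoonup \nabla u^m$ weakly in $L^2_{loc}$ (against $\nabla\varphi\in L^2$). This yields \eqref{eq:weak-pme} for $u$, so $u$ is a local weak solution to $\partial_t u-\Delta u^m=0$.

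\textbf{Main obstacle.} The one genuinely delicate point is the compactness (strong $L^1_{loc}$ convergence of $u_i$): the gradient estimate controls $v_i=u_i^{m_i}$ while the time-derivative estimate most naturally controls $u_i$, and the nonlinear, $i$-dependent change of variables between them must be handled carefully — this is exactly where one invokes the Aubin--Lions-type lemma for doubly nonlinear/porous-medium structures from \cite{KinnunenLindqvist2, FDE-meas}, rather than the plain Aubin--Lions theorem. Everything else (the a priori bounds, identification of weak limits, the limit in the weak form) is routine.
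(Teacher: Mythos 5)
Your overall architecture (local gradient bound from the Caccioppoli estimate, time-derivative bound from the equation, compactness, then Lemma~\ref{lem:conv-of-powers} to convert powers, then weak convergence to pass to the limit) matches the paper's, and you have correctly put your finger on the single delicate point: the gradient bound controls $v_i=u_i^{m_i}$ while the time derivative controls $u_i$, and the change of variables $u_i\mapsto u_i^{m_i}$ is not uniformly bi-Lipschitz on $[0,M]$ because of the degeneracy/singularity at $u_i=0$. However, your resolution of this point is not actually a proof. You invoke ``a well-known compactness lemma for equations of the form $\partial_t b(v)-\Delta v=0$'' and cite \cite{KinnunenLindqvist2, FDE-meas}, but those references treat a \emph{fixed} nonlinearity and a \emph{fixed} exponent; they are cited in the paper only for the Caccioppoli estimate of Lemma~\ref{lem:caccioppoli}, not for any Aubin--Lions-type statement. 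Here the nonlinearity $b_i(v)=v^{1/m_i}$ varies with $i$, and it is not standard that an Alt--Luckhaus/Simon-type lemma survives that variation without further work. As written, the crucial compactness step is asserted, not proved.

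The paper's actual device is different and is the idea you are missing. One does \emph{not} try to make the compactness lemma swallow the $i$-dependent degenerate nonlinearity directly. Instead, for each $\delta\in(0,1)$ one introduces auxiliary solutions $u_{i,\delta}$ of the \emph{same} equations $\partial_t u_{i,\delta}-\Delta u_{i,\delta}^{m_i}=0$ in $U_{t_1,t_2}$ with the lateral data $u_i^{m_i}+\delta^{m_i}$ and the initial data $u_i(\cdot,t_1)+\delta$. By the comparison principle $\delta\le u_{i,\delta}\le M+\delta$, so on that range the map $s\mapsto s^{m_i}$ \emph{is} uniformly bi-Lipschitz (with constants depending on $\delta$, $M$, $m^\pm$ but not on $i$), and the ordinary Simon compactness \cite{Simon} applies to $(u_{i,\delta})_i$ for each fixed $\delta$. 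The limit $u$ is then obtained by sending $\delta_j=1/j\to 0$ through a monotone family $u_{\delta_j}$. The step that closes the gap between $u_{i,\delta}$ and $u_i$ is quantitative: Ole\u{\i}nik's test function (Lemma~\ref{lem:oleinik}) yields
\begin{displaymath}
  \int_{U_{t_1,t_2}}(u_{i,\delta}-u_i)(u_{i,\delta}^{m_i}-u_i^{m_i})\dif x\dif t
  \le c(\delta+\delta^{m^-}),
\end{displaymath}
uniformly in $i$, and the elementary difference-of-powers estimate (Lemma~\ref{lem:diff-of-powers}) converts this into a bound on the measure of $\{\,|u_i^{\om_i}-u_{i,\delta}^{\om_i}|\ge\lambda\,\}$ that can be made small uniformly in $i$ by choosing $\delta$ small. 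A three-$\varepsilon$ argument then gives $u_i^{\om_i}\to u^{\om}$ in measure, and Lemma~\ref{lem:conv-of-powers} finishes as you indicate. Without this $\delta$-regularization (or a genuine replacement for it), your appeal to a doubly nonlinear Aubin--Lions lemma with an $i$-dependent nonlinearity is a gap, not a citation. The remaining steps of your sketch (bounded sequences, identification of weak limits, passing to the limit in the weak formulation) are fine and essentially identical to the paper's.
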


We split the proof of Theorem \ref{thm:compactness} into several
lemmas.  In view of the convergence assumption \eqref{eq:mconv}, we
are free to assume that
\begin{displaymath}
  m_c< m^-\quad\text{and}\quad  m^+<\infty.
\end{displaymath}
Let us define the auxiliary exponents
\begin{displaymath}
  \om_i=\max\{m_i,1\}\quad \text{and}\quad \um_i=\min\{m_i,1\}.
\end{displaymath}
Then $\om_i\geq \um_i$, and one of these exponents always equals $m_i$
and the other equals one. We use a similar notation for the limit
exponent $m$.

The proof of Theorem \ref{thm:compactness} consists of three main
steps: first we apply a compactness result \cite{Simon} to certain
auxiliary functions to find the limit function $u$.  Then we show that
$u_i^{\om_i}$ converges to $u^{\om}$ in measure.  This is the most
involved part of the proof, and for a key estimate we apply a test
function due to Ole\u\i nik.  In the last step, we establish the
pointwise convergences $u_i\to u$ and $u_i^{m_i}\to u^m$ by applying
Lemma \ref{lem:conv-of-powers}, and the fact that $u$ is a local weak
solution by applying Lemma \ref{lem:caccioppoli}.

The next lemma will be used in proving the convergence in measure. 
\begin{lemma}\label{lem:diff-of-powers}
  Let $0\leq s<t\leq M$, where $M\geq 1$, be such that
  \begin{displaymath}
    t^{\om}-s^{\om}\geq \lambda >0.
  \end{displaymath}
  Then
  \begin{displaymath}
    t^{\um}-s^{\um}\geq \frac{\um}{\om}M^{\um-\om}\min\{\lambda,\lambda^{\um/\om}\}
  \end{displaymath}
\end{lemma}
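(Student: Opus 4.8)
The plan is to reduce this two-sided power comparison to a one-variable estimate and then exploit concavity. Set $q=\um/\om$, noting that $0<\um\leq\om$ forces $q\in(0,1]$, and use the identity $r^{\um}=(r^{\om})^{q}$, valid for all $r\geq 0$. Putting $a=s^{\om}$ and $b=t^{\om}$, the hypotheses become $0\leq a<b\leq M^{\om}$ and $b-a\geq\lambda$, while the quantity to be bounded from below is $t^{\um}-s^{\um}=b^{q}-a^{q}$.

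The key step is the tangent-line inequality for the concave function $x\mapsto x^{q}$: since its graph lies below the tangent line at $b$ (here $b=t^{\om}>s^{\om}\geq 0$, so $b>0$), evaluating that tangent at $x=a$ gives $a^{q}\leq b^{q}+qb^{q-1}(a-b)$, i.e. $b^{q}-a^{q}\geq qb^{q-1}(b-a)$; this also covers the case $a=0$, where it merely says $b^{q}\geq qb^{q}$. Now $b-a\geq\lambda>0$, and since $q-1\leq 0$ and $b=t^{\om}\leq M^{\om}$ we have $b^{q-1}\geq (M^{\om})^{q-1}=M^{\om(q-1)}=M^{\um-\om}$. Combining these, $t^{\um}-s^{\um}\geq qM^{\um-\om}\lambda=\tfrac{\um}{\om}M^{\um-\om}\lambda$, which is at least $\tfrac{\um}{\om}M^{\um-\om}\min\{\lambda,\lambda^{\um/\om}\}$ because $\lambda\geq\min\{\lambda,\lambda^{\um/\om}\}$ trivially. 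Hence the claim, in fact with a slightly better constant.

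There is no serious obstacle here; the only points requiring a little care are verifying that the tangent-line inequality survives at the possibly vanishing endpoint $s=0$ (done above, since we take the tangent at $b$, not at $a$), and tracking the sign of the exponent of $M$ — one uses $M\geq 1$ together with $\um-\om\leq 0$ so that replacing $t^{\um-\om}$ by $M^{\um-\om}$ decreases the quantity. Equivalently one could invoke the mean value theorem for $x\mapsto x^{q}$ on $[a,b]$; and if one wanted a proof literally mirroring the two alternatives $\lambda$ and $\lambda^{\um/\om}$ in the statement, one would distinguish $\lambda\leq 1$ from $\lambda>1$, but the concavity argument makes this unnecessary.
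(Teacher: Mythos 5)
Your proof is correct and is essentially the same as the paper's: the paper applies the mean value theorem to $\alpha\mapsto(\cdot)^{\um/\om}$ and then bounds the intermediate derivative value $\tfrac{\um}{\om}\xi^{\um/\om-1}\geq\tfrac{\um}{\om}M^{\um-\om}$ using $\xi\leq t^{\om}\leq M^{\om}$ and the nonpositivity of the exponent, exactly the estimate you obtain from the tangent-line form of concavity at $b=t^{\om}$. The only difference is cosmetic: the paper separates the case $s=0$ (where it shows $t^{\um}\geq\lambda^{\um/\om}$ directly), whereas your phrasing handles $s=0$ uniformly and in fact yields the slightly sharper bound $\tfrac{\um}{\om}M^{\um-\om}\lambda$ without the $\min$.
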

\begin{proof}
  Consider first the case $s=0$. Then 
  \begin{displaymath}
    t^{\um}-s^{\um}=t^{\um}\geq \lambda^{\um/\om},
  \end{displaymath}
  since $t^{\om}\geq \lambda$. Assume then that $s>0$. By the mean value
  theorem, we have
  \begin{displaymath}
    t^{\um}-s^{\um}=(t^{\om})^{\um/\om}-(s^{\om})^{\um/\om}=
    \frac{\um}{\om}{\xi^{\um/\om-1}}(t^{\om}-s^{\om})
  \end{displaymath}
  for some $\xi\in (s^{\om},t^{\om})$. Since $\frac{\um}{\om}-1\leq 0$
  and $t^{\om}\leq M^{\om}$, we have
  \begin{displaymath}
    \frac{\um}{\om}{\xi^{\frac{\um}{\om}-1}}(t^{\om}-s^{\om})\geq 
    \frac{\um}{\om}M^{\um-\om}\lambda,
  \end{displaymath}
  which completes the proof.
\end{proof}

The following lemma provides the key estimate for showing that the
original sequence also converges to the limit found by applying the
compactness result.

\begin{lemma}\label{lem:oleinik}
  Let $U$ and $u_i$ be as in Theorem \ref{thm:compactness}, and
  let $0<t_1<t_2<T$.

  Fix a number $0<\delta<1$, and suppose that $u_{i,\delta}$ is the
  unique function which satisfies
  \begin{displaymath}
    \begin{cases}
      \partial_t u_{i,\delta}-\Delta u^{m_i}_i=0&\text{in }U_{t_1,t_2},\\
      u_{i,\delta}^{m_i}-u_{i}^{m_i}-\delta^{m_i}\in L^2(t_1,t_2;H^1_0(U)),&\\
      u_{i,\delta}(x,t_1)=u_i(x,t_1)+\delta, & x\in U.
    \end{cases}
  \end{displaymath}

  Then
  \begin{displaymath}
    \int_{U_{t_1,t_2}}(u_{i,\delta}-u_i)(u^{m_i}_{i,\delta}-u^{m_i}_i)\dif x\dif t
    \leq c(\delta+\delta^{m^-}),
  \end{displaymath}
  where
  \begin{displaymath}
    c=2^{m^++1}(M^{m^+}+M+1)\abs{U_{t_1,t_2}}
  \end{displaymath}
  and $M$ is the number appearing in \eqref{eq:assumptionM1}.
\end{lemma}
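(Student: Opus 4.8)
The plan is to exploit the fact that $u_i$ and $u_{i,\delta}$ solve the \emph{same} equation $\partial_t w-\Delta w^{m_i}=0$ in $U_{t_1,t_2}$ and to test the equation for their difference with a function of Ole\u\i nik type. First I would record two consequences of the comparison principle for the Dirichlet problem for \eqref{eq:pme} (see \cite{DaskalopoulosKenig, VazquezBook2, VazquezBook}). Since $u_{i,\delta}$ has initial datum $u_i(\cdot,t_1)+\delta\geq u_i(\cdot,t_1)$ and lateral datum determined by $u_{i,\delta}^{m_i}=u_i^{m_i}+\delta^{m_i}\geq u_i^{m_i}$, comparing $u_{i,\delta}$ with $u_i$ gives $0\leq u_i\leq u_{i,\delta}$; in particular the integrand on the left-hand side is pointwise nonnegative. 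Comparing $u_{i,\delta}$ with an appropriate constant, using $u_i\leq M$, $M\geq1$ and $\delta<1$, gives bounds $u_{i,\delta}\leq\bar M$ and $u_{i,\delta}^{m_i}\leq\bar M_0$ with $\bar M,\bar M_0$ depending only on $M$ and on the range $[m^-,m^+]$ of the exponents. Write $w=u_{i,\delta}-u_i$ and $P=u_{i,\delta}^{m_i}-u_i^{m_i}$; then $0\leq P\leq u_{i,\delta}^{m_i}\leq\bar M_0$, the lateral condition says $P-\delta^{m_i}\in L^2(t_1,t_2;H^1_0(U))$, and $\partial_t w-\Delta P=0$ holds weakly in $U_{t_1,t_2}$.

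Next I would introduce the Ole\u\i nik test function
\[
  \psi(x,t)=\int_t^{t_2}\bigl(P(x,s)-\delta^{m_i}\bigr)\dif s,\qquad t_1\leq t\leq t_2 .
\]
Since $P(\cdot,s)-\delta^{m_i}$ has zero lateral trace, $\psi(\cdot,t)\in H^1_0(U)$ for every $t$, and moreover $\psi(\cdot,t_2)=0$, $\partial_t\psi=-(P-\delta^{m_i})$ and $\nabla\psi(x,t)=\int_t^{t_2}\nabla P(x,s)\dif s$. After a standard mollification in the time variable (using \eqref{eq:naumann-conv} and Lemma \ref{lem:conv-prop}), which is what makes $\psi$ an admissible test function for $u_i$ and $u_{i,\delta}$ and lets us handle the time derivatives, I would test the equation $\partial_t w-\Delta P=0$ with $\psi$ and pass to the limit in the mollification parameter. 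Integrating by parts in time — the boundary term at $t_2$ vanishes and the one at $t_1$ carries $w(\cdot,t_1)=\delta$ — and in space, using $\int_{U_{t_1,t_2}}\nabla P\cdot\nabla\psi\dif x\dif t=\tfrac12\int_U\abs{\nabla\psi(x,t_1)}^2\dif x\geq0$ (obtained by viewing $\nabla\psi$ as the antiderivative in $t$ of $-\nabla P$ that vanishes at $t_2$), this yields the identity
\begin{align*}
  \int_{U_{t_1,t_2}}(u_{i,\delta}-u_i)(u_{i,\delta}^{m_i}-u_i^{m_i})\dif x\dif t
  &=\delta\int_U\psi(x,t_1)\dif x
   +\delta^{m_i}\int_{U_{t_1,t_2}}(u_{i,\delta}-u_i)\dif x\dif t\\
  &\quad-\frac12\int_U\abs{\nabla\psi(x,t_1)}^2\dif x .
\end{align*}

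The last term is nonpositive and may be discarded. For the first term on the right, $0\leq P\leq\bar M_0$ gives $\psi(x,t_1)\leq\bar M_0(t_2-t_1)$, hence $\delta\int_U\psi(x,t_1)\dif x\leq\bar M_0\,\delta\,\abs{U_{t_1,t_2}}$; for the second, $0\leq u_{i,\delta}-u_i\leq\bar M$ gives $\delta^{m_i}\int_{U_{t_1,t_2}}(u_{i,\delta}-u_i)\dif x\dif t\leq\bar M\,\delta^{m_i}\,\abs{U_{t_1,t_2}}$. Since $0<\delta<1$ one has $\delta^{m_i}\leq\delta$ when $m_i\geq1$ and $\delta^{m_i}\leq\delta^{m^-}$ when $m_i<1$, so both contributions are bounded by a constant multiple of $\delta+\delta^{m^-}$; a routine estimation of $\bar M_0$ and $\bar M$ by the elementary inequalities $(a+b)^p\leq a^p+b^p$ for $0<p\leq1$ and $(a+b)^p\leq2^{p-1}(a^p+b^p)$ for $p\geq1$, together with $M\geq1$ and $m^-\leq m_i\leq m^+$, then leads to the stated bound.

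The main obstacle is the rigorous execution of the test-function step: one has to mollify in time to give meaning to $\partial_t w$, verify that $\psi$ (and its mollification) is admissible for the two solutions — in particular that the term at $t=t_1$ appears with the value $w(\cdot,t_1)=\delta$, which uses that bounded weak solutions have a representative that is continuous in time into $L^2_{loc}(U)$ — and then pass to the limit as the mollification parameter tends to zero. Once the displayed identity is available, the remaining estimates are entirely elementary.
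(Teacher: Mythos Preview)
Your proposal is correct and follows essentially the same route as the paper: the paper also uses the Ole\u\i nik test function $\eta(x,t)=\int_t^{t_2}(u_{i,\delta}^{m_i}-u_i^{m_i}-\delta^{m_i})\dif s$, tests the two equations and subtracts, obtains the same identity (up to a harmless $\delta^{1+m_i}\abs{U_{t_1,t_2}}$ term coming from whether one writes $\int_U\eta(\cdot,t_1)$ or $\int P$ on the right), discards the nonnegative gradient square, and bounds the remaining two terms using $u_{i,\delta}\leq M+1$ from the comparison principle. Your explicit recording of $u_i\leq u_{i,\delta}$ and the discussion of the mollification step are a bit more detailed than the paper's presentation, but the argument is the same.
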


Before proceeding with the proof let us note that the technical reason
for introducing the exponents $\om_i$ and $\um_i$ is that we may write
\begin{displaymath}
  (u_{i,\delta}-u_i)(u^{m_i}_{i,\delta}-u^{m_i}_i)
  =(u_{i,\delta}^{\um_i}-u_i^{\um_i})(u^{\om_i}_{i,\delta}-u^{\om_i}_i)
\end{displaymath}
when applying this lemma. 

\begin{proof}
  The proof is an application of a test function due to Ole\u\i nik.
  The function $u_{i,\delta}^{m_i}-u_{i}^{m_i}-\delta^{m_i} $ has zero
  boundary values in Sobolev's sense, so the same is true for the
  function
  \begin{displaymath}
    \eta(x,t)=
    \begin{cases}
      \int_t^{t_2} u_{i,\delta}^{m_i}-u_{i}^{m_i}-\delta^{m_i}\dif s, & t_1<t<t_2,\\
      0, &t\geq t_2.
    \end{cases}
  \end{displaymath}
  We use $\eta$ as a test function in the equations satisfied by
  $u_{i,\delta}$ and $u_i$, and substract the results. This gives
  \begin{multline*}
    \int_{U_{t_1,t_2}}(u_{i,\delta}-u_i)(u_{i,\delta}^{m_i}-u_{i}^{m_i}
    -\delta^{m_i})\dif x\dif t\\
    \begin{aligned}
      +&\int_{U_{t_1,t_2}}\nabla(u_{i,\delta}^{m_i}-u_{i}^{m_i})
      \int_t^{t_2}\nabla(u_{i,\delta}^{m_i}-u_{i}^{m_i})\dif s\dif x\dif t\\
      =&\delta\int_{U}\int_{t_1}^{t_2} u_{i,\delta}^{m_i}-u_{i}^{m_i}\dif s\dif x.
    \end{aligned}
  \end{multline*}
  We move the term with $\delta$ to the right hand side, and integrate
  with respect to $t$ in the elliptic term. We get
  \begin{multline*}
    \int_{U_{t_1,t_2}}(u_{i,\delta}-u_i)(u^{m_i}_{i,\delta}-u^{m_i}_i)\dif
    x\dif t +\frac{1}{2}\int_U\left[\int_{t_1}^{t_2}\nabla
      (u_{i,\delta}^{m_i}-u_{i}^{m_i})\dif s\right]^2\dif x\\
    =\delta^{m_i}\int_{U_{t_1,t_2}}(u_{i,\delta}-u_i)\dif x\dif t
    +\delta\int_{U_{t_1,t_2}}(u^{m_i}_{i,\delta}-u_i^{m_i})\dif x\dif t
  \end{multline*}
  Now, both of the terms on the left are positive, so we have the
  freedom to take absolute values of the right hand side.  The claim
  then follows by discarding the second term on the left hand side,
  and estimating the integrals on the last line by using
  \eqref{eq:assumptionM1} and the fact that $u_{i,\delta}\leq M+1$, by
  the comparison principle. Indeed, we have
  \begin{displaymath}
    \scabs{\int_{U_{t_1,t_2}}(u_{i,\delta}-u_i)\dif x\dif t}\leq 2(M+1)\abs{U_{t_1,t_2}}
  \end{displaymath}
  and
  \begin{displaymath}
    \scabs{\int_{U_{t_1,t_2}}(u_{i,\delta}^{m_i}-u_i^{m_i})\dif x\dif t}\leq 
    2^{m^+}(M^{m^+}+1)\abs{U_{t_1,t_2}}.\qedhere
  \end{displaymath}
\end{proof}

The following lemma is the key step in the proof of Theorem
\ref{thm:compactness}.
\begin{lemma}\label{lem:sharp-conv}
  There exists a subsequence, still indexed by $i$, and a function $u$
  such that
  \begin{displaymath}
    u^{\om_i}_i\to u^{\om}\quad\text{as}\quad i\to\infty
  \end{displaymath}
  in measure and pointwise almost everywhere in $U_{t_1,t_2}$.
\end{lemma}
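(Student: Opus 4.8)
The plan is to combine the compactness theorem of Simon applied to suitable auxiliary functions with the Ole\u\i nik estimate of Lemma \ref{lem:oleinik}. First I would fix a cylinder $U_{t_1,t_2}\Subset\Omega_T$ and work there; a diagonal argument over an exhaustion of $\Omega_T$ by such cylinders then upgrades convergence in a fixed cylinder to convergence in all of $\Omega_T$. The natural auxiliary functions are $w_i=u_i^{\om_i}$: by Lemma \ref{lem:caccioppoli} (applied with exponent $\om_i$, noting $u_i^{m_i}$ and $u_i$ differ by a bounded power when $m_i<1$, and more directly when $m_i\geq 1$), together with the boundedness \eqref{eq:assumptionM1}, the family $(w_i)$ is bounded in $L^2(t_1,t_2;H^1(U'))$ for $U'\Subset U$. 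For the time derivatives, the equation gives $\partial_t u_i=\Delta u_i^{m_i}$, and rewriting $\partial_t w_i$ in terms of $\partial_t u_i$ (using $w_i=u_i^{\om_i}$, so $\partial_t w_i = \om_i u_i^{\om_i-1}\partial_t u_i$ when $m_i\geq1$, and handling the singular case with care near $u_i=0$) one obtains a bound for $\partial_t w_i$ in $L^1(t_1,t_2;H^{-1}(U'))$ or a similar negative-order space, uniformly in $i$. Simon's compactness theorem then yields a subsequence with $w_i\to w$ strongly in $L^2(U'_{t_1,t_2})$ and pointwise a.e.

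Next I would identify $w$ as a power of a limit function $u$. Since $m_i\to m$ and $m_c<m^-\leq m^+<\infty$, the exponents $\om_i$ converge to $\om$ and are bounded away from the problematic regime; set $u_i = w_i^{1/\om_i}$, which is the same as the original $u_i$. Applying Lemma \ref{lem:conv-of-powers} to the sequence $(w_i)$ with exponents $1/\om_i\to 1/\om$ gives $u_i\to w^{1/\om}$ pointwise a.e.; call this limit $u$, so that $w=u^{\om}$ and hence $w_i=u_i^{\om_i}\to u^{\om}$ pointwise a.e. Convergence in measure on $U_{t_1,t_2}$ then follows from pointwise a.e. convergence on a set of finite measure by Egorov's theorem (or directly, since $L^2$ convergence already implies convergence in measure).

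The role of Lemma \ref{lem:oleinik} is to guarantee that the \emph{original} sequence, not merely the subsequence produced by Simon's theorem, converges — in other words, that the limit $u$ is independent of the subsequence. Here is where the argument is most delicate, and I expect this to be the main obstacle. Suppose $u_i^{\om_i}$ does not converge to $u^{\om}$ in measure; then there is $\varepsilon>0$, a further subsequence, and sets $E_i\subset U_{t_1,t_2}$ with $|E_i|\geq\varepsilon$ on which $|u_i^{\om_i}-u^{\om}|\geq\varepsilon$. Comparing $u_i$ with the perturbed solution $u_{i,\delta}$ of Lemma \ref{lem:oleinik}: by the comparison principle $u_i\leq u_{i,\delta}$ and $u_{i,\delta}^{m_i}-u_i^{m_i}\geq\delta^{m_i}>0$ on the interior, so $u_{i,\delta}^{\om_i}-u_i^{\om_i}$ is bounded below by a positive quantity (using Lemma \ref{lem:diff-of-powers} to pass from $\um_i$-differences to $\om_i$-differences, or vice versa, exploiting the factorization $(u_{i,\delta}-u_i)(u_{i,\delta}^{m_i}-u_i^{m_i})=(u_{i,\delta}^{\um_i}-u_i^{\um_i})(u_{i,\delta}^{\om_i}-u_i^{\om_i})$ noted after the statement of Lemma \ref{lem:oleinik}). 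The Ole\u\i nik estimate bounds $\int_{U_{t_1,t_2}}(u_{i,\delta}-u_i)(u_i^{m_i}_{,\delta}-u_i^{m_i})\dif x\dif t$ by $c(\delta+\delta^{m^-})$, which is small for small $\delta$; combined with the lower bounds this forces $u_{i,\delta}^{\om_i}-u_i^{\om_i}$ to be small in measure, uniformly in $i$. Then $u_i^{\om_i}$ and $u_{i,\delta}^{\om_i}$ are close in measure uniformly in $i$, and running the Simon-compactness argument on $(u_{i,\delta})$ as well (same bounds, with constants depending on $M+1$ in place of $M$) and letting $\delta\to0$ pins down the limit. The technical care needed is in making the $\delta$-dependence of all estimates uniform in $i$ and in the singular case $m_i<1$, where the relation between $\om_i$- and $\um_i$-powers near zero must be controlled via Lemma \ref{lem:diff-of-powers}.
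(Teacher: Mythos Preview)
There is a genuine gap in your first step. You propose to apply Simon's compactness directly to $w_i=u_i^{\om_i}$, and for this you need a bound on $\partial_t w_i$ in a negative-order space. In the degenerate case $m_i\geq 1$ you have $w_i=u_i^{m_i}$, and the formal chain rule $\partial_t u_i^{m_i}=m_i u_i^{m_i-1}\partial_t u_i=m_i u_i^{m_i-1}\Delta u_i^{m_i}$ does not yield such a bound: the factor $u_i^{m_i-1}$ is only in $L^\infty$, and multiplying an element of $L^2(t_1,t_2;H^{-1})$ by an $L^\infty$ function does not in general land back in $H^{-1}$ (you would need control on $\nabla u_i^{m_i-1}$, which blows up where $u_i$ vanishes). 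So the compactness of $(u_i^{\om_i})$ itself is exactly the difficulty, not something you can assume.

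This is precisely why the paper runs the argument in the opposite order. Simon's theorem is applied not to $u_i$ or $u_i^{\om_i}$ but to the perturbed solutions $u_{i,\delta}$ of Lemma~\ref{lem:oleinik}: because $\delta\leq u_{i,\delta}\leq M+1$, one can bound $\nabla u_{i,\delta}$ (not a power!) in $L^2$ via $\nabla u_{i,\delta}=\tfrac{1}{m_i}u_{i,\delta}^{1-m_i}\nabla u_{i,\delta}^{m_i}$ and Lemma~\ref{lem:caccioppoli}, while $\partial_t u_{i,\delta}=\Delta u_{i,\delta}^{m_i}$ is bounded in $L^2(t_1,t_2;H^{-1})$ directly from the equation. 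Simon then gives $u_{i,\delta}\to u_\delta$ for each fixed $\delta$; a diagonal argument over $\delta_j=1/j$ together with the comparison principle yields a decreasing family $(u_{\delta_j})$ whose pointwise limit defines $u$. Only \emph{after} this does Lemma~\ref{lem:oleinik} enter, and its role is not uniqueness of subsequential limits (the statement only claims a subsequence) but rather to show that $u_{i,\delta}^{\om_i}-u_i^{\om_i}$ is small in measure \emph{uniformly in $i$}, via Lemma~\ref{lem:diff-of-powers} and the factorization $(u_{i,\delta}-u_i)(u_{i,\delta}^{m_i}-u_i^{m_i})=(u_{i,\delta}^{\um_i}-u_i^{\um_i})(u_{i,\delta}^{\om_i}-u_i^{\om_i})$. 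This transfers the convergence from the compact family $(u_{i,\delta}^{\om_i})$ to $(u_i^{\om_i})$. You have all the ingredients in your third paragraph, but they belong at the start of the proof, not the end.
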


\begin{proof}
  Recall that $U$ is an open set such that $U\Subset\Omega$, and
  $0<t_1<t_2<T$.  We use the auxiliary functions $u_{i,\delta}$
  defined in Lem\-ma \ref{lem:oleinik}: for any $0<\delta<1$,
  $u_{i,\delta}$ is the weak solution in $U_{t_1,t_2}$ with boundary
  and initial values $u_i^{m_{i}}+\delta^{m_i}$ and
  $u(\cdot,t_1)+\delta$, respectively.  Note that $\delta\leq
  u_{i,\delta}\leq M+\delta$ by the comparison principle.

  Fix then a regular open set $V\Subset U$, and $t_1<s_1<s_2<t_2$.
  For indices such that $m_i\geq 1$, we have
   \begin{align*}
     \abs{\nabla u_{i,\delta}}=&\nabla (u_{i,\delta}^{m_i})^{1/m_{i}}
     =\abs{u_i^{1/m_i-1}\nabla u_{i,\delta}^{m_i}} \\ \leq & \delta^{1/m_i-1}
       \abs{\nabla u^{m_i}_{i,\delta}}\leq \delta^{1/m^+-1}
       \abs{\nabla u^{m_i}_{i,\delta}},
   \end{align*}
  and for indices such that $m_i<1$ we have
  \begin{align*}
    \abs{\nabla u_{i,\delta}}=&\abs{\nabla (u_{i,\delta}^{m_i})^{1/m_{i}}}\leq 
    \abs{u_i^{1/m_i-1}\nabla u_{i,\delta}^{m_i}}\\ \leq & (M+\delta)^{1/m_i-1}
    \abs{\nabla u^{m_i}_{i,\delta}}\leq (M+1)^{1/m^--1}
    \abs{\nabla u^{m_i}_{i,\delta}}.
  \end{align*}
  Thus Lemma \ref{lem:caccioppoli} implies that $(\nabla
  u_{i,\delta})$ is bounded in $L^2(V_{s_1,s_2})$. The fact that the
  sequence $(\partial_tu_{i,\delta})$ is bounded in
  $L^2(s_1,s_2;H^{-1}(V))$ follows easily from the equation satisfied
  by $u_i,\delta$ and Lemma \ref{lem:caccioppoli}. An application of
  \cite[Corollary 4]{Simon} shows that the sequence $(u_{i,\delta})$
  is compact in $L^2(V_{s_1,s_2})$. Thus for any fixed $\delta>0$, we
  may extract a subsequence which converges in $L^2(V_{s_1,s_2})$ and
  pointwise almost everywhere.

  The next step is a repeated application of the compactness
  established in the previous step. Let $\delta_j=1/j$,
  $j=1,2,3,\ldots,$ and for $j=1$ pick indices $i$ such that
  \begin{displaymath}
    u_{i,\delta_1}\to u_{\delta_1}
  \end{displaymath}
  for some function $u_{\delta_1}$, the convergence being in
  $L^2(V_{s_1,s_2})$ and pointwise almost everywhere. We proceed by
  picking a further subsequence so that also
  \begin{displaymath}
    u_{i,\delta_2}\to u_{\delta_2}
  \end{displaymath}
  in $L^2(V_{s_1,s_2})$ and pointwise almost everywhere.  We continue
  this way, and get a twodimensional table of indices $(i,j)$, where
  the values of $i$ in the diagonal positions have the property that
  \begin{displaymath}
    u_{i,\delta_j}\to u_{\delta_j} \text{ as }i\to \infty 
    \text{ for all }j,\text{ in }L^2(V_{s_1,s_2})\text{ and pointwise a.e.} 
  \end{displaymath}
  By the comparison principle, we have
  \begin{displaymath}
    u_{i,\delta_j}\geq u_{i,\delta_k}\quad\text{for}\quad k\geq j,
  \end{displaymath}
  and letting $i$ tend to infinity we get
  \begin{displaymath}
    u_{\delta_j}\geq u_{\delta_k}, \quad \text{whenever}\quad k\geq j.
  \end{displaymath}
  Thus we may define the function $u$ in the claim of the current
  lemma as the pointwise limit
  \begin{displaymath}
    u(x,t)=\lim_{j\to\infty} u_{\delta_j}(x,t).
  \end{displaymath}

  We have now found the limit function $u$, and the proof will be
  completed by showing that $u_i^{\om_i}$ converges to $u^{\om}$ in
  measure, first in $V_{s_1,s_2}$ and then in $U_{t_1,t_2}$ by an
  exhaustion argument. The first convergence is a consequence of
  Lemmas \ref{lem:diff-of-powers} and \ref{lem:oleinik}.  Obviously
  $u_{\delta_j}^{\om}\to u^{\om}$ pointwise almost everywhere as $j\to
  \infty$. Further, Lemma \ref{lem:conv-of-powers} implies that also
  $u_{i,\delta_j}^{\om_i}\to u_{\delta_j}^{\om}$ a.e. as $i\to
  \infty$.

  To show the convergence in measure, let $\lambda >0$ and
  $\varepsilon>0$. We have
  \begin{align*}
    \abs{\{\abs{u_i^{\om_i}-u^{\om}}\geq \lambda\}}\leq &
    \abs{\{\abs{u_i^{\om_i}-u^{\om_i}_{i,\delta_j}}\geq \lambda/3\}}\\
    +&\abs{\{\abs{u_{i,\delta_j}^{\om_i}-u^{\om}_{\delta_j}}\geq \lambda/3\}}
    +\abs{\{\abs{u_{\delta_j}^{\om}-u^{\om}}\geq \lambda/3\}}
  \end{align*}
  Since $u_{i,\delta_j}^{\um_i}-u_i^{\um_i}\geq
  c\min\{\lambda,\lambda^{\um_i/\om_i}\}$ by Lemma
  \ref{lem:diff-of-powers}, we have
  \begin{multline*}
    \abs{\{\abs{u_i^{\om_i}-u^{\om_i}_{i,\delta_j}}\geq \lambda/3\}}=
    \int_{\{\abs{u_i^{\om_i}-u^{\om_i}_{i,\delta_j}}\geq \lambda/3\}}1\dif x\dif t\\
    \begin{aligned}
      \leq & \frac{3}{\lambda}\int_{\{\abs{u_i^{\om_i}-u^{\om_i}_{i,\delta_j}}\geq \lambda/3\}}
      (u^{\om_i}_{i,\delta_j}-u^{\om_i}_{i})\dif x\dif t\\
      \leq & \frac{c}{\lambda\min\{\lambda,\lambda^{\um_i/\om_i}\}}
      \int_{V_{s_1,s_2}}(u_{i,\delta_j}^{\um_i}-u_{i}^{\um_i})(u^{\om_i}_{i,\delta_j}-u^{\om_i}_{i})\dif x\dif t\\
      \leq & \frac{c}{\lambda\min\{\lambda,\lambda^{\um_i/\om_i}\}}
      (\delta_j+\delta_j^{m^-}),
    \end{aligned}
  \end{multline*}
  where the last inequality follows from Lemma \ref{lem:oleinik} and
  the fact that
  \begin{displaymath}
    (u_{i,\delta}-u_i)(u^{m_i}_{i,\delta}-u^{m_i}_i)
    =(u_{i,\delta}^{\um_i}-u_i^{\um_i})(u^{\om_i}_{i,\delta}-u^{\om_i}_i).
  \end{displaymath} 
  Thus for each fixed $\lambda>0$, we may choose $\delta_j$ small
  enough, so that
  \begin{displaymath}
    \abs{\{\abs{u^{\om_i}_{i,\delta_j}-u_i^{\om_i}}\geq \lambda/3\}}\leq \varepsilon.
  \end{displaymath}
  Here it is crucial that this choice can be made independent of $i$.

  From the pointwise convergence established above, it follows that
  $u^{\om}_{\delta_j}\to u^{\om}$ in measure. Thus we may choose
  $\delta_j$ small enough, so that
  \begin{displaymath}
    \abs{\{\abs{u_{\delta_j}^{\om}-u^{\om}}\geq \lambda/3\}}\leq \varepsilon.
  \end{displaymath}
  
  Finally, we know that $ u_{i,\delta_j}^{\om_i}\to u_{\delta_j}^{\om}$
  pointwise almost everywhere and hence also in measure. Thus the
  above estimates imply that
  \begin{displaymath}
    \limsup_{i\to \infty}\abs{\{\abs{u_i^{\om_i}-u^{\om}}\geq \lambda\}}\leq
    \lim_{i\to\infty}\abs{\{\abs{u_{i,\delta_j}^{\om_i}-u^{\om}_{\delta_j}\}}
      \geq \lambda/3\}}
    +2\varepsilon=2\varepsilon.
  \end{displaymath}
  Since $\varepsilon$ was arbitrary, we get
  \begin{displaymath}
    \lim_{i\to\infty}\abs{\{\abs{u_i^{\om_i}-u^{\om}}\geq \lambda\}}=0
  \end{displaymath}
  for all $\lambda>0$, as desired. 
  We have proved that for cylinders $V_{s_1,s_2}$ such that
  $V_{s_1,s_2}\Subset U_{t_1,t_2}\Subset \Omega_T$, we may extract a
  subsequence such that
  \begin{displaymath}
    u_i^{\om_i}\to u^{\om} \quad\text{in measure in }V_{s_1,s_2} 
    \text{ as} \quad i\to\infty.
  \end{displaymath}
  Passing to a further subsequence, we have convergence pointwise
  almost everywhere. 

  To find a subsequence converging in the whole of $U_{t_1,t_2}$, we
  use a diagonalization argument. Exhaust $U$ by regular open sets
  $V^k$, and choose nested time intervals
  $(s_1^1,s_2^1)\Subset(s_1^2,s_2^2)\Subset\ldots $, $k=1,2,\ldots$,
  such that
  \begin{displaymath}
    U_{t_1,t_2}=\cup_{k=1}^\infty V_{s_1^k,s_2^k}^k.
  \end{displaymath}
  First, pick a subsequence such that $(u^{\om_i}_i)$ converges in
  measure in $V^1_{s_1^1,s_2^1}$ to $u^{\om}$. The procedure continues
  inductively, by the selection of a further subsequence that
  converges in measure in $V^{k+1}_{s_1^{k+1},s_2^{k+1}}$ to the
  function $u^{\om}$. Taking the $k$th index in the subsequence
  selected in the $k$th step yields a subsequence convergent in
  measure in $U_{t_1,t_2}$.
\end{proof}

The next step in the proof of Theorem \ref{thm:compactness} is to
prove the convergence of the other powers of $u$ by using the
convergence established in the previuos lemma.

\begin{lemma}\label{lem:sharp-to-others}
  Let $u_i$, $i=1,2,3,\ldots$, be such that $0\leq u_i\leq M<\infty$,
  and $u_i^{\om_i}\to u^{\om}$ in measure in $U_{t_1,t_2}$. Then
  $u_i\to u$ and $u_i^{m_i}\to u^m$ almost everywhere in $U_{t_1,t_2}$
  as $i\to \infty$.
\end{lemma}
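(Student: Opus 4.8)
The plan is to deduce both convergences from Lemma~\ref{lem:conv-of-powers}, applied twice with exponents that vary with $i$. Since convergence in measure forces a subsequence to converge pointwise almost everywhere, I would first pass to such a subsequence (still indexed by $i$) so that $u_i^{\om_i}\to u^{\om}$ pointwise a.e.\ in $U_{t_1,t_2}$; in the application inside the proof of Theorem~\ref{thm:compactness} this costs nothing, since Lemma~\ref{lem:sharp-conv} already supplies a subsequence converging both in measure and pointwise a.e. Because $\om_i\geq 1$ and $0\leq u_i\leq M$, the functions $u_i^{\om_i}$ are bounded by a constant depending only on $M$ and $m^+$, and $U_{t_1,t_2}$ has finite measure; so dominated convergence upgrades the a.e.\ convergence to $u_i^{\om_i}\to u^{\om}$ in $L^1(U_{t_1,t_2})$.

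Next I would invoke Lemma~\ref{lem:conv-of-powers} with $f_i=u_i^{\om_i}$, $f=u^{\om}$, and exponents $\alpha_i=1/\om_i$. Here one checks the harmless facts that $\om_i=\max\{m_i,1\}\to\max\{m,1\}=\om$ by continuity of the maximum, and that the exponents $1/\om_i$ lie in $(0,1]$ and converge to $1/\om$. The lemma then yields
\[
  u_i=\bigl(u_i^{\om_i}\bigr)^{1/\om_i}\longrightarrow \bigl(u^{\om}\bigr)^{1/\om}=u
\]
pointwise almost everywhere in $U_{t_1,t_2}$.

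Finally, with $u_i\to u$ a.e.\ and $0\leq u_i\leq M$, dominated convergence again gives $u_i\to u$ in $L^1(U_{t_1,t_2})$, so a second application of Lemma~\ref{lem:conv-of-powers}, this time with $f_i=u_i$, $f=u$, and $\alpha_i=m_i\to m$, produces $u_i^{m_i}\to u^m$ pointwise almost everywhere, finishing the proof. This is essentially a bookkeeping argument, so I do not expect a genuine obstacle; the only points needing a moment's care are verifying the hypotheses of Lemma~\ref{lem:conv-of-powers} (finiteness of $\abs{U_{t_1,t_2}}$, and obtaining the required $L^1$ convergence for free from the uniform bound), confirming the convergence and positivity of the varying exponents, and remembering that the passage ``convergence in measure $\Rightarrow$ convergence a.e.'' is valid only along a subsequence.
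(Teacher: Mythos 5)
Your argument is essentially the paper's: use the uniform bound $u_i\leq M$ together with convergence in measure to upgrade to $L^1$ (the paper states $L^q$ for any finite $q$, which is the same fact), and then apply Lemma~\ref{lem:conv-of-powers} twice, first with exponents $1/\om_i\to 1/\om$ to pass from $u_i^{\om_i}\to u^{\om}$ to $u_i\to u$, and then with exponents $m_i\to m$ to obtain $u_i^{m_i}\to u^m$. You are slightly more careful than the paper in flagging that the a.e.\ convergence needed as a hypothesis in Lemma~\ref{lem:conv-of-powers} is only available along a subsequence and in noting why this costs nothing in the application inside Theorem~\ref{thm:compactness}; otherwise the two proofs coincide.
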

\begin{proof}
  The bound $u_i\leq M$ and convergence in measure imply that
  $u_i^{\om_i}\to u^{\om}$ in $L^q(U_{t_1,t_2})$ for any finite
  $q$. Thus we obtain the desired convergences by two applications of
  Lemma \ref{lem:conv-of-powers}, first passing from the convergence
  $u_i^{\om_i}\to u^{\om}$ to the convergence $u_i\to u$, and then from 
  $u_i\to u$ to $u_i^{m_i}\to u^m$.
\end{proof}

With all of the preceding lemmas available, the proof of Theorem
\ref{thm:compactness} is now a relatively simple matter.
\begin{proof}[Proof of Theorem \ref{thm:compactness}]
  The pointwise convergences follow from Lemma
  \ref{lem:sharp-to-others} by an exhaustion argument, as in the proof
  of Lemma \ref{lem:sharp-conv}.  To show that $u$ is a local weak
  solution, fix a test function $\varphi\in C_0^\infty(\Omega_T)$. We
  use the bound $u_i\leq M$ and Lemma \ref{lem:caccioppoli} to get
  subsequences of $(u_i)$ and $(\nabla u^{m_i}_i)$, weakly convergent
  in $L^2(\spt \varphi)$. Due to pointwise convergences, we see that
  the weak limits must be $u$ and $\nabla u^m$, respectively. From the
  weak convergences, it follows that
  \begin{displaymath}
    \int_{\Omega_T}-u\frac{\partial \varphi}{\partial t}+\nabla u^m\cdot 
    \nabla \varphi\dif x\dif t=\lim_{i\to\infty}
    \left(\int_{\Omega_T}-u_i\frac{\partial \varphi}{\partial t}
      -\nabla u^{m_i}_i\cdot\nabla \varphi\dif x\dif t\right)=0.
  \end{displaymath}
  This holds for all test functions $\varphi$, so the proof is complete.
\end{proof}

\section{Stability of Dirichlet problems}
\label{sec:stab-dirichlet}

In this section we prove a stability result for Dirichlet boundary
value problems in bounded domains. We use Theorem
\ref{thm:compactness}, a local $L^\infty$ estimate (Proposition
\ref{prop:local-bound}), and a simple energy estimate (Lemma
\ref{lem:energy-est}). In this section, we take $\Omega$ to be
bounded.

\begin{definition}\label{def:pme-bvp}
  Let $u_0\in L^{m+1}(\Omega)$ and $g\in H^1(0,T;H^1(\Omega))$.  A
  positive function $u$ such that $u^m\in L^2(0,T;H^1(\Omega))$is a
  solution of the initial--boundary value problem
  \begin{equation}\label{eq:pme-bvp}
    \begin{cases}
      \partial_t u-\Delta u^{m}=0, & \text{in }\Omega_T,\\
      u^m=g, & \text{on }\partial \Omega\times [0,T],\\
      u(x,0)=u_0(x).
    \end{cases}
  \end{equation}
  if  $u^m-g\in L^2(0,T;H^1_0(\Omega))$, and 
  \begin{displaymath}
    \int_{\Omega_T}-u\frac{\partial\varphi}{\partial t}
    +\nabla u^m\cdot\nabla\varphi\dif x\dif
    t+\int_{\Omega}u(x,T)\varphi(x,T)\dif x=\int_\Omega
    u_0(x)\varphi(x,0)\dif x
  \end{displaymath}
  for all smooth test functions $\varphi$ which vanish on the lateral
  boundary of $\Omega_T$.
\end{definition}
For the existence and uniqueness of solutions in the above sense, see
\cite[Chapter 5]{VazquezBook}. By the usual approximation argument, we
may use test functions $\varphi\in L^2(0,T;H^1_0(\Omega))$.

Let $u_i$ be the solution to
\begin{equation}\label{eq:i-equations}
  \begin{cases}
    \partial_t u_i-\Delta u^{m_{i}}_i=0, & \text{in }\Omega_T,\\
    u^m_i=g, & \text{on }\partial \Omega\times [0,T],\\
    u_i(x,0)=u_0(x)
  \end{cases}
\end{equation}
in the sense of definition \ref{def:pme-bvp}. We will find a
subsequence of $(u_i)$ that converges in a suitable sense to a
function $u$, and show that $u$ is a solution of the limit problem,
i.e. satisfies
\begin{equation}\label{eq:limit-equation}
  \begin{cases}
    \partial_t u-\Delta u^{m}=0, & \text{in }\Omega_T,\\
    u^m=g, & \text{on }\partial \Omega\times [0,T],\\
    u(x,0)=u_0(x).
  \end{cases}
\end{equation}
More precisely, we have the following theorem.
\begin{theorem}\label{thm:stability}
  Let $m_i$, $i=1,2,3,\ldots$, be a sequence of exponents such that
  \begin{displaymath}
    m_i\to m > m_c=(n-2)_+/n\quad\text{as}\quad i\to \infty
  \end{displaymath}
  Let $u_i$ be the solutions to \eqref{eq:i-equations} with fixed
  initial and boundary values $g$ and $u_0$, where 
  \begin{displaymath}
    g\in H^1(0,T; H^1(\Omega)),\quad
    \frac{\partial g}{\partial t}\in
    L^{1+1/m^-}(\Omega_T),\quad\text{and}\quad u_0\in L^{m^++1}(\Omega).
  \end{displaymath}
  Finally, let $u$ be the solution to \eqref{eq:limit-equation} with
  the boundary and initial values $g$ and $u_0$, respectively.
  
  Then 
  \begin{enumerate} 
  \item $u_i\to u$ in $L^{q}(\Omega_T)$ for all $1\leq q<1+m$.
  \item $u_i^{m_i}\to u^m$ in $L^s(\Omega_T)$ for all $1\leq s<2\kappa$, where
    \begin{displaymath}
      \kappa=1+\frac{1}{m}+\frac{1}{mn}.
    \end{displaymath}
  \item $\nabla u_i^{m_i}\to \nabla u^m$ weakly in $L^2(\Omega_T)$.
  \end{enumerate}
\end{theorem}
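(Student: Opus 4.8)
The plan is to deduce the result from the local compactness Theorem \ref{thm:compactness} together with two global a priori estimates: a local $L^\infty$ bound and an energy estimate, both uniform in $i$. First I would establish the local boundedness: Proposition \ref{prop:local-bound} should give, for any $U_{t_1,t_2}\Subset\Omega_T$, a bound $\norm{u_i}_{L^\infty(U_{t_1,t_2})}\leq M$ depending only on $n$, $m^-$, $m^+$, the distances to the parabolic boundary, and a suitable integral norm of $u_i$; the latter in turn is controlled uniformly by the energy estimate of Lemma \ref{lem:energy-est}, which bounds $\esssup_{0<t<T}\int_\Omega u_i^{1+1/m_i}\dif x$ and $\int_{\Omega_T}\abs{\nabla u_i^{m_i}}^2\dif x\dif t$ in terms of the data $g$, $u_0$ and $\partial_t g$ only. (This is where the hypotheses $\partial_t g\in L^{1+1/m^-}(\Omega_T)$ and $u_0\in L^{m^++1}(\Omega)$ enter, via Young's inequality applied after testing the equation with $u_i^{m_i}-g$.) With the uniform local $L^\infty$ bound in hand, Theorem \ref{thm:compactness} applies and yields a subsequence and a limit function $u$, a local weak solution of $\partial_t u-\Delta u^m=0$, with $u_i\to u$ and $u_i^{m_i}\to u^m$ pointwise a.e.\ in $\Omega_T$.

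Next I would upgrade the pointwise convergences to the $L^q$ and $L^s$ convergences in (1) and (2). The energy estimate gives a uniform bound on $\esssup_t\int_\Omega u_i^{1+1/m_i}\dif x$ and on $\int_{\Omega_T}\abs{\nabla u_i^{m_i}}^2$; feeding these into the Sobolev inequality of Lemma \ref{lem:sobolev} (applied to $u_i^{m_i}-g$, then absorbing $g$) produces a uniform bound on $\int_{\Omega_T}(u_i^{m_i})^{2\kappa_i}$ with $\kappa_i=1+1/n+1/(m_i n)\to\kappa$, hence a uniform bound on $\norm{u_i^{m_i}}_{L^{s'}(\Omega_T)}$ for any $s'<2\kappa$ and all large $i$. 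Equi-integrability then follows, and combined with the a.e.\ convergence $u_i^{m_i}\to u^m$ and Vitali's convergence theorem this gives (2). For (1), $u_i^{1+1/m_i}$ is bounded in $L^\infty(0,T;L^1(\Omega))$ uniformly, and interpolating against the $L^{s'}$ bound on $u_i^{m_i}$ gives a uniform bound on $u_i$ in $L^{q'}(\Omega_T)$ for some $q'>1+m$; again Vitali plus a.e.\ convergence $u_i\to u$ yields (1).

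For (3), the uniform bound $\int_{\Omega_T}\abs{\nabla u_i^{m_i}}^2\dif x\dif t\leq C$ lets me extract a further subsequence with $\nabla u_i^{m_i}\rightharpoonup G$ weakly in $L^2(\Omega_T)$; since $u_i^{m_i}\to u^m$ in $L^1_{loc}$, the weak limit of the gradients must be $\nabla u^m$, so $G=\nabla u^m$ and (3) holds. Finally I would verify that $u$ actually solves the limit initial-boundary value problem in the sense of Definition \ref{def:pme-bvp}: that $u^m-g\in L^2(0,T;H^1_0(\Omega))$ follows because $u_i^{m_i}-g\in L^2(0,T;H^1_0(\Omega))$ with uniformly bounded norm and $u_i^{m_i}-g\rightharpoonup u^m-g$ weakly in $L^2(0,T;H^1(\Omega))$, a closed subspace; and the weak formulation, including the attainment of the initial datum $u_0$, passes to the limit in each term using the convergences just established (the term $\int_\Omega u_i(x,T)\varphi(x,T)\dif x$ requires a little care and can be handled by the time-mollification device of Lemma \ref{lem:conv-prop} to control $\partial_t u_i$, or by noting the uniform bound forces $u_i(\cdot,T)\rightharpoonup u(\cdot,T)$). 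By uniqueness of the limit problem the whole sequence converges, not just a subsequence.

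The main obstacle I expect is the energy estimate with inhomogeneous boundary data: testing with $u_i^{m_i}-g$ is formally illegitimate because it requires $\partial_t u_i$, so one must run the argument through the mollification $u_i^\ast$ of Lemma \ref{lem:conv-prop}, handle the term $\int \partial_t u_i\cdot g$ by Young's inequality in the exponents $1+1/m_i$ and $1+m_i$ (which is exactly why $\partial_t g\in L^{1+1/m^-}$ is assumed, noting $1+1/m_i\leq 1+1/m^-$), and obtain constants uniform in $i$ despite the moving exponent $m_i$. The bookkeeping to keep every constant independent of $i$ — and to confirm that the exponent ranges $q<1+m$, $s<2\kappa$ are the sharp outcome of the interpolation — is the delicate part; everything else is a routine passage to the limit.
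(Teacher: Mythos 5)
Your proposal is correct and follows essentially the same route as the paper: Proposition \ref{prop:local-bound} plus the energy estimate of Lemma \ref{lem:energy-est} supply the uniform local $L^\infty$ bound feeding into Theorem \ref{thm:compactness}; the Sobolev inequality of Lemma \ref{lem:sobolev} supplies the uniform $L^{2\kappa_i}$ bound on $u_i^{m_i}$; Vitali (equivalently, ``convergence in measure plus a higher $L^p$ bound'') upgrades the a.e.\ convergence to the claimed $L^q$ and $L^s$ convergences; weak $L^2$ compactness of the gradients plus the a.e.\ convergence identifies the limit as $\nabla u^m$; weak closedness of $L^2(0,T;H^1_0)$ recovers the boundary condition; and uniqueness of the limit problem promotes subsequential convergence to convergence of the full sequence. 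Two small remarks: what the energy estimate controls is $\esssup_t\int_\Omega u_i^{m_i+1}$ (equivalently $\esssup_t\int_\Omega (u_i^{m_i})^{1+1/m_i}$), not $\esssup_t\int_\Omega u_i^{1+1/m_i}$ as written; and for the passage to the limit in the weak formulation the paper simply restricts to test functions vanishing at $t=T$, so the boundary term $\int_\Omega u_i(x,T)\varphi(x,T)\dif x$ you flagged never actually appears.
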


We need an energy estimate for establishing that the limit function
attains the right boundary values in Sobolev's sense, and for
verifying the local boundedness assumption in Theorem
\ref{thm:compactness}. To derive it, we use the equation satisfied by
the mollified solution $u^\ast$:
\begin{equation}
  \label{eq:reg-bvp}
  \int_{\Omega_T}\varphi\frac{\partial u^\ast}{\partial t}
  +\nabla (u^m)^\ast\cdot\nabla\varphi \dif x\dif t=  
  \int_{\Omega}u_0(x)\left(\frac{1}{\sigma}\int_{0}^T\varphi e^{-s/\sigma}
    \dif s\right)\dif x
\end{equation}
This is required to hold for all test functions $\varphi\in L^2(0,T;
H^1_0(\Omega))$.  The equation \eqref{eq:reg-bvp} follows from
\eqref{eq:pme-bvp} by straightforward manipulations involving a change
of variables and Fubini's theorem.

\begin{lemma}\label{lem:energy-est}
  Let $u$ be the weak solution with boundary values $g$ and initial
  values $u_0$. Then
  \begin{multline}
    \esssup_{0<t<T}\int_{\Omega}u^{m+1}(x,t)\dif x
    +\int_{\Omega_T}\abs{\nabla u^m}^2\dif x\dif t\\
    \leq c\left(\int_{\Omega_T} \abs{\nabla g}^2
      +\scabs{\frac{\partial g}{\partial t}}^{1+1/m} \dif x\dif t 
      +\int_{\Omega}u_0(x)^{m+1}\dif x\right).\label{eq:energy-est}
  \end{multline}
\end{lemma}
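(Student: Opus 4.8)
The plan is to test the equation with $(u^m-g)\theta$, where $u^m-g$ vanishes on the lateral boundary of $\Omega_T$ (hence is admissible in the sense of Definition~\ref{def:pme-bvp}) and $\theta=\theta(t)$ is a Lipschitz cutoff equal to $1$ on $(0,\tau)$ and decreasing to $0$ over $(\tau,\tau+h)$; sending $h\to0$ recovers the time slice $\{t=\tau\}$ for a.e.\ $\tau\in(0,T)$. Using $\partial_t u\cdot u^m=\tfrac{1}{m+1}\partial_t(u^{m+1})$ in the parabolic term and integrating by parts in time in the term carrying $g$, this produces, for a.e.\ $\tau$,
\begin{equation*}
  \frac{1}{m+1}\int_\Omega u^{m+1}(x,\tau)\dif x+\int_{\Omega\times(0,\tau)}\abs{\nabla u^m}^2\dif x\dif t
  =\frac{1}{m+1}\int_\Omega u_0^{m+1}\dif x+R(\tau),
\end{equation*}
where
\begin{equation*}
  R(\tau)=\int_{\Omega\times(0,\tau)}\nabla u^m\cdot\nabla g\dif x\dif t+\int_\Omega u(x,\tau)g(x,\tau)\dif x-c_0\int_\Omega u_0\,g(x,0)\dif x-\int_{\Omega\times(0,\tau)}u\,\frac{\partial g}{\partial t}\dif x\dif t
\end{equation*}
collects the contributions of $g$, with $c_0$ a harmless constant.

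To close the estimate I bound $R(\tau)$ by elementary inequalities. The first term is handled by $\abs{\nabla u^m\cdot\nabla g}\le\tfrac14\abs{\nabla u^m}^2+\abs{\nabla g}^2$, the gradient square moving to the left. For the remaining three products I use Young's inequality in the form $ab\le\varepsilon a^{m+1}+C_\varepsilon\,b^{1+1/m}$, noting that $1+1/m$ is conjugate to $m+1$: with $\varepsilon$ small the factors $u^{m+1}(x,\tau)$ and $\int_{\Omega\times(0,\tau)}u^{m+1}$ are absorbed — the first into the left-hand side, the second into $\esssup_{0<\tau<T}\int_\Omega u^{m+1}(x,\tau)\dif x$ after a routine iteration (using $\int_{\Omega\times(0,\tau)}u^{m+1}\le T\,\esssup_\tau\int_\Omega u^{m+1}(x,\tau)\dif x$) — while $\int_\Omega u_0^{m+1}\dif x$ stays on the right. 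What remains is $\int_{\Omega_T}\abs{\partial_t g}^{1+1/m}\dif x\dif t$ and $\int_\Omega u_0^{m+1}\dif x$, together with $\esssup_{0<\tau<T}\int_\Omega g(x,\tau)^{1+1/m}\dif x$ and $\int_\Omega g(x,0)^{1+1/m}\dif x$; the last two are controlled by $g$ alone and absorbed into the constant, since $g\in H^1(0,T;H^1(\Omega))\hookrightarrow C([0,T];H^1(\Omega))$ and, because $\Omega$ is bounded and $m>m_c$, $H^1(\Omega)\hookrightarrow L^{1+1/m}(\Omega)$. Taking the essential supremum over $\tau$ and rearranging gives \eqref{eq:energy-est}.

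The genuine difficulty is that $u^m-g$ is not an admissible test function — Definition~\ref{def:pme-bvp} provides no time derivative for it — which is precisely why the mollified equation~\eqref{eq:reg-bvp} was introduced. One instead tests \eqref{eq:reg-bvp} with $\varphi=\bigl((u^m)^\ast-g^\ast\bigr)\theta$; here $(u^m)^\ast-g^\ast=(u^m-g)^\ast$ lies in $L^2(0,T;H^1_0(\Omega))$ by Lemma~\ref{lem:conv-prop}, and every time derivative now acts on $u^\ast$, where by \eqref{eq:naumann-timederiv} it is an honest $L^2$ function. As $\sigma\to0$ the elliptic term and the terms linear in $u$ pass to the limit by the norm and almost everywhere convergences in Lemma~\ref{lem:conv-prop}, and $u^\ast(\cdot,t)\to u(\cdot,t)$ supplies the slice contributions. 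The main obstacle is the parabolic term $\int_{\Omega\times(0,\tau)}\partial_t u^\ast\,(u^m)^\ast\dif x\dif t$: because $(u^m)^\ast\ne(u^\ast)^m$ it is not simply $\tfrac{1}{m+1}\partial_t(u^\ast)^{m+1}$. Writing $\partial_t u^\ast=(\partial_t u)^\ast+\sigma^{-1}e^{-t/\sigma}u_0$ — the identity underlying \eqref{eq:naumann-timederiv} — and comparing $(u^m)^\ast$ with $(u^\ast)^m$ through the convexity (or concavity) of $s\mapsto s^m$, one sees that this term splits into a bulk part tending to $\tfrac{1}{m+1}\int_\Omega u^{m+1}(x,\tau)\dif x$ and a boundary-layer part concentrated near $t=0$ of size comparable to $\int_\Omega u_0^{m+1}\dif x$; the bookkeeping there produces exactly the coefficient $\tfrac{1}{m+1}$ in front of $\int_\Omega u_0^{m+1}\dif x$ (and it would in fact suffice to bound this limit from below in the right direction, which is all \eqref{eq:energy-est} requires). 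With the limit of the parabolic term identified, passing $\sigma\to0$ yields the displayed identity, and hence \eqref{eq:energy-est}.
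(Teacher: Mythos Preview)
Your overall plan --- test with $u^m-g$, integrate the parabolic term, and close with Young's inequality in the conjugate exponents $m+1$ and $1+1/m$ --- is exactly the paper's, and your bookkeeping of the cross terms (including the slice term $\int_\Omega u(x,\tau)g(x,\tau)\dif x$, which the paper's integration by parts in fact glosses over) is sound.

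The gap is in the mollification step. You choose to test \eqref{eq:reg-bvp} with the \emph{mollified} function $\bigl((u^m)^\ast-g^\ast\bigr)\theta$, which forces you to handle $\int\partial_t u^\ast\,(u^m)^\ast$. Your proposed fix --- the identity $\partial_t u^\ast=(\partial_t u)^\ast+\sigma^{-1}e^{-t/\sigma}u_0$ combined with a convexity comparison of $(u^m)^\ast$ and $(u^\ast)^m$ --- does not deliver the one-sided bound you need. The identity invokes $(\partial_t u)^\ast$, precisely the object whose existence as a function is at issue; and even granting it formally, Jensen for $s\mapsto s^m$ only fixes the sign of $(u^m)^\ast-(u^\ast)^m$, not of its product with $\partial_t u^\ast=(u-u^\ast)/\sigma$, which can take either sign pointwise. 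So the claimed lower bound on the parabolic term is not established.

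The paper avoids this by a simpler choice: it tests \eqref{eq:reg-bvp} with the \emph{unmollified} $\varphi=u^m-g$, which is admissible since it lies in $L^2(0,T;H^1_0(\Omega))$. The parabolic term is then $\int\partial_t u^\ast\cdot u^m$, and the decisive one-line observation is
\[
  \partial_t u^\ast\,\bigl(u^m-(u^\ast)^m\bigr)
  =\frac{1}{\sigma}(u-u^\ast)\bigl(u^m-(u^\ast)^m\bigr)\ge 0,
\]
by monotonicity of $t\mapsto t^m$. Hence
\[
  \int_{\Omega_T}\partial_t u^\ast\,u^m\dif x\dif t
  \ge \int_{\Omega_T}\partial_t u^\ast\,(u^\ast)^m\dif x\dif t
  =\frac{1}{m+1}\int_\Omega (u^\ast)^{m+1}(x,T)\dif x,
\]
and letting $\sigma\to 0$ produces exactly the slice term on the left. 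No convexity splitting, no boundary-layer analysis, and no appeal to $(\partial_t u)^\ast$ are needed; the inequality goes the right way directly. With this substitution, the rest of your argument (the Young absorptions and the passage to the near-maximizing time $\tau$) goes through as written.
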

\begin{proof}
  We test the regularized equation \eqref{eq:reg-bvp} with
  $\varphi=u^m-g$. This yields
  \begin{multline*}
    \int_{\Omega_T}\frac{\partial u^\ast}{\partial t}(u^m-g)\dif x\dif t
    +\int_{\Omega_T}\nabla (u^m)^\ast\cdot\nabla(u^m-g)\dif x\dif t\\
    =\int_{\Omega}u_0(x)\left(\frac{1}{\sigma}\int_{0}^T(u^m-g)e^{-s/\sigma}
      \dif s\right)\dif x.
  \end{multline*}
  We rearrange this to get
  \begin{multline}\label{eq:energyest-pf1}
    \int_{\Omega_T}\frac{\partial u^\ast}{\partial t} u^m\dif x\dif t
    +\int_{\Omega_T}\nabla (u^m)^\ast\cdot\nabla u^m\dif x\dif t\\
    \begin{aligned}
      = & \int_{\Omega_T}\frac{\partial u^\ast}{\partial t} g\dif x\dif t
      +\int_{\Omega_T}\nabla (u^m)^\ast\cdot \nabla g\dif x\dif t\\
      &+\int_{\Omega}u_0(x)\left(\frac{1}{\sigma}\int_{0}^T(u^m-g)e^{-s/\sigma}
      \dif s\right)\dif x.
    \end{aligned}
  \end{multline}
  
  For the first term on the left, we get 
  \begin{align*}
    \int_{\Omega_T}\frac{\partial u^\ast}{\partial t} u^m\dif x\dif t
    =&\int_{\Omega_T}\frac{\partial u^\ast}{\partial t} (u^\ast)^m\dif
    x\dif t
    +\int_{\Omega_T}\frac{\partial u^\ast}{\partial t} (u^m-(u^\ast)^m)\dif x\dif t\\
    \geq &\int_{\Omega_T}\frac{\partial u^\ast}{\partial t} (u^\ast)^m\dif x\dif t
    =\int_{\Omega}\frac{u^\ast(x,T)^{m+1}}{m+1}\dif x\\
    \to & \int_{\Omega}\frac{u(x,T)^{m+1}}{m+1}\dif x
  \end{align*}
  as $\sigma\to 0$. Here we used \eqref{eq:naumann-timederiv}
  to get the inequality.  For the second, it suffices to note that
  \begin{displaymath}
    \int_{\Omega_T}\nabla (u^m)^\ast\cdot\nabla u^m\dif x\dif t\to 
    \int_{\Omega_T} \abs{\nabla u^m}^2\dif x\dif t
  \end{displaymath}
  as $\sigma\to 0$.
  
  The limits of the left hand side terms in \eqref{eq:energyest-pf1}
  are positive, so we are free to take absolute values on the right
  after passing to the limit $\sigma\to 0$. In the first term on the
  left, we integrate by parts before taking the limit:
  \begin{displaymath}
    \int_{\Omega_T}\frac{\partial u^\ast}{\partial t}g\dif x\dif t
    =-\int_{\Omega_T}u^\ast\frac{\partial g}{\partial t}\dif x\dif t
    \to -\int_{\Omega_T}u\frac{\partial g}{\partial t}\dif x\dif t.
  \end{displaymath}
  We proceed by taking absolute values, applying Young's inequality,
  and taking the supremum over $t$.  We get
  \begin{displaymath}
    \scabs{\int_{\Omega_T}u\frac{\partial g}{\partial t}\dif x\dif t}\leq 
    \varepsilon \esssup_{0<t<T}\int_{\Omega}u^{1+m}(x,t)\dif x
    +c_\varepsilon\int_{\Omega_T}
    \scabs{\frac{\partial g}{\partial t}}^{1+1/m}\dif x\dif t.
  \end{displaymath}
  
  The limit of the second term on the right of
  \eqref{eq:energyest-pf1} is
  \begin{displaymath}
    \int_{\Omega_T}\nabla u^m\cdot \nabla g\dif x\dif t.
  \end{displaymath}
  Here we simply take absolute values and use Young's inequality to get
  \begin{displaymath}
    \scabs{\int_{\Omega_T}\nabla u^m\cdot \nabla g\dif x\dif t}\leq 
    \varepsilon \int_{\Omega_T}\abs{\nabla u^m}^2\dif x\dif t
    +c\int_{\Omega_T}\abs{\nabla g}^2\dif x\dif t.
  \end{displaymath}
  
  For the third term on the right of \eqref{eq:energyest-pf1}, taking
  the limit yields
  \begin{displaymath}
    \int_{\Omega}u_0^{m+1}(x)\dif x-\int_{\Omega}u_0(x)g(x,0)\dif x;
  \end{displaymath}
  The first term needs no further estimations, and the second is
  negative so we may discard it.

  We have arrived at 
  \begin{multline}
    \label{eq:energyest-pf2} 
        \int_{\Omega} \frac{u^{m+1}(x,T)}{m+1}\dif x
    +\int_{\Omega_T}\abs{\nabla u^m}^2\dif x\dif t\\
    \begin{aligned}
    \leq & \varepsilon \int_{\Omega_T}\abs{\nabla u^m}^2\dif x\dif t 
      +\varepsilon\esssup_{0<t<T}\int_\Omega u(x,t)^{1+m}\dif x\\
      &+c\int_{\Omega}u_0^{m+1}(x)\dif x 
      +c\left(\int_{\Omega_T}\abs{\nabla g}^2
        +\scabs{\frac{\partial g}{\partial t}}^{1+1/m}\dif x\dif t\right).
    \end{aligned}
  \end{multline}
  To finish the proof, replace $T$ in the above proof by a number
  $0<\tau<T$ such that
  \begin{displaymath}
    \int_{\Omega} u(x,\tau)^{m+1}\dif x\geq 
    \frac{1}{2}\esssup_{0<t<T}\int_{\Omega} u(x,t)^{m+1}\dif x. 
  \end{displaymath}
  This leads to an estimate for
  \begin{displaymath}
    \esssup_{0<t<T}\int_{\Omega} u(x,t)^{m+1}\dif x
    +\int_{\Omega_\tau}\abs{\nabla u^m}^2\dif x\dif t
  \end{displaymath}
  in terms of the right hand side of \eqref{eq:energyest-pf2}. The claim
  follows by choosing $\varepsilon$ sufficiently small and then
  absorbing the matching terms to the left hand side.
\end{proof}

An analysis of the above proof shows that the constant may be taken to
depend only on $m^+$ and $m^-$, as $m$ varies over the interval
$[m^-,m^+]$. Thus, in view of \eqref{eq:energy-est} and the
assumptions on $g$ and $u_0$ in Theorem \ref{thm:stability}, we see
that the sequence $(\nabla u_i^{m_i})$ is bounded in $L^2(\Omega_T)$,
and by the Sobolev embedding, $(u_i^{m_i})$ is bounded in
$L^{2}(\Omega_T)$.

We use the following estimate to verify the local boundedness
assumption in Theorem \ref{thm:compactness}. See \cite[Proposition B.5.1]{DGV}.

\begin{proposition}\label{prop:local-bound}
  Let $u$ be a local weak solution, and suppose that
  $B_\rho\times[t_0-\rho^2,t_0]\Subset \Omega_T $. Then
  \begin{displaymath}
    \esssup_{B_{\rho/2}\times [t_0-\rho^2/2,t_0]} u\leq 
    c\left(\vint_{B_\rho\times[t_0-\rho^2,t_0]}u\dif x\dif t\right)^{2/\lambda}
    +1,
  \end{displaymath}
  where
  \begin{displaymath}
    \lambda=n(m-1)+2>0.
  \end{displaymath}
\end{proposition}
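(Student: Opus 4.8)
The statement is the standard De Giorgi--Nash--Moser $L^1$--$L^\infty$ smoothing estimate for the porous medium / fast diffusion equation, valid in the supercritical range $m>m_c$, and I would prove it by the usual iteration over shrinking parabolic cylinders. First I would fix nested cylinders $Q_j = B_{\rho_j}\times[t_0-\rho_j^2,t_0]$ with $\rho_j = \rho/2 + \rho/2^{j+1}$, so that $Q_0 = B_\rho\times[t_0-\rho^2,t_0]$ and $\bigcap_j Q_j = B_{\rho/2}\times[t_0-\rho^2/2,t_0]$, together with smooth cutoffs $\zeta_j$ equal to $1$ on $Q_{j+1}$, supported in $Q_j$, with $|\nabla\zeta_j|\lesssim 2^j/\rho$ and $|\partial_t\zeta_j|\lesssim 4^j/\rho^2$. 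I would also fix an increasing sequence of levels $k_j = k(1-2^{-j})$ converging to a level $k\geq 1$ to be chosen, and work with the truncated functions $(u-k_j)_+$, or rather with the quantity $w_j = (u^m - k_j^m)_+$, which is the natural object since $u^m$ is the Sobolev function. The key algebraic point, used to reduce powers, is that since $u$ is bounded below away from the relevant level one can compare $(u-k_j)_+$ and $(u^m-k_j^m)_+$ via the mean value theorem as in Lemma \ref{lem:diff-of-powers}; the precise exponent $\lambda=n(m-1)+2$ in the conclusion comes from tracking the scaling exponent through the Sobolev inequality (Lemma \ref{lem:sobolev}), whose gain $\kappa = 1+\tfrac1n+\tfrac1{mn}$ is exactly what makes the iteration converge when $\lambda>0$, i.e. when $m>m_c$.

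The iteration itself proceeds by deriving a Caccioppoli-type energy inequality on each $Q_j$: testing the (mollified) equation against $\zeta_j^2(u^m-k_j^m)_+$ and passing $\sigma\to0$ as in the proof of Lemma \ref{lem:energy-est} yields
\begin{displaymath}
  \esssup_{t}\int_{B_{\rho_{j+1}}}(u-k_j)_+^{m+1}\,\zeta_j^2\dif x
  + \int_{Q_j}\zeta_j^2\abs{\nabla(u^m-k_j^m)_+}^2\dif x\dif t
  \leq c\,\frac{4^j}{\rho^2}\int_{Q_j\cap\{u>k_j\}}\big((u-k_j)_+^{m+1} + \text{lower order}\big)\dif x\dif t.
\end{displaymath}
Combining this with the Sobolev inequality of Lemma \ref{lem:sobolev} applied to $\zeta_j(u^m-k_j^m)_+$ converts the energy bound into a reverse-type estimate for the quantity
\begin{displaymath}
  Y_j := \vint_{Q_j}(u-k_j)_+^{m+1}\dif x\dif t
\end{displaymath}
(or a close variant), of the form $Y_{j+1}\leq C\,b^{j}\,Y_j^{1+\beta}$ for constants $C\geq1$, $b>1$, $\beta>0$ depending only on $n$, $m^+$, $m^-$. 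A standard fast-geometric-convergence lemma then shows $Y_j\to0$ provided $Y_0 \leq C^{-1/\beta}b^{-1/\beta^2}$, and one arranges this by choosing the level $k$ appropriately: unwinding the definitions, $k$ ends up comparable to $\big(\vint_{Q_0}u\dif x\dif t\big)^{2/\lambda}+1$, which gives the claimed bound once one notes $u\leq k$ a.e. on the innermost cylinder. The $+1$ absorbs the lower-order terms and guarantees $k\geq 1$, which is where the hypothesis $M\geq1$-type normalization is used.

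The main obstacle is bookkeeping the exponents so that the recursion closes with the correct power $2/\lambda$: one must carefully interpolate between the $L^{m+1}$ in time-sup and the $L^2$-gradient term produced by the energy estimate, feed both into the Sobolev inequality, and verify that the resulting self-improving exponent $1+\beta$ is genuinely bigger than $1$ precisely under $\lambda>0$, i.e. $m>m_c$ — the singular range $m<1$ being the delicate case since then $m+1<2$ and the sup-term is weaker. A second technical point is the passage to the limit $\sigma\to0$ in the mollified equation when testing with the truncated test function, which must be handled exactly as in Lemma \ref{lem:energy-est} using Lemma \ref{lem:conv-prop}; I would simply cite that argument rather than repeat it. Since this is a known estimate, I would in fact reference \cite[Proposition B.5.1]{DGV} for the full details and only indicate the structure above.
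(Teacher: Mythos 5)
The paper does not prove Proposition \ref{prop:local-bound}; it simply cites \cite[Proposition B.5.1]{DGV}, which is exactly what you do at the end of your proposal. Your outline of the underlying De Giorgi iteration (nested parabolic cylinders, truncations of $u^m$, Caccioppoli plus the Sobolev inequality of Lemma \ref{lem:sobolev}, fast geometric convergence, with the $+1$ absorbing lower-order terms and $\lambda=n(m-1)+2>0$ driving convergence) is a correct account of how the cited result is obtained, so your proposal is consistent with, and somewhat more informative than, the paper's treatment.
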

As is discussed in \cite{DGV}, the constants in estimates of this type
are stable as $m$ decreases or increases to one, but blow up as $m\to
m_c$ or $m\to \infty$. Hence we may again assume that the constant is
independent of $i$, as $m_i$ varies in the interval $[m^-,m^+]$.  Note
also that we have written this estimate over standard parabolic
cylinders $B_\rho\times(t_0-\rho^2,t_0)$, for otherwise the estimate
would not be stable.

The proof of our stability result is now a straightforward matter.
\begin{proof}[Proof of Theorem \ref{thm:stability}]
  We combine Lemmas \ref{lem:sobolev} and \ref{lem:energy-est} with
  Proposition \ref{prop:local-bound} to verify the local boundedness
  assumption \eqref{eq:assumptionM1} in Theorem
  \ref{thm:compactness}. Thus the theorem yields a subsequence and a
  function $\widetilde{u}$ such that $u_i\to \widetilde{u}$ and
  $u^{m_i}_i\to \widetilde{u}^m$, pointwise almost everywhere in
  $\Omega_T$.  We may also assume that $\nabla u_{i}^{m_i}\to \nabla
  \widetilde{u}^m$ weakly in $L^2(\Omega_T)$, by Lemma
  \ref{lem:energy-est} and the pointwise convergence.

  To pass to the limit in the equations, we need a bound on $u_i$. 
  We have
  \begin{align*}
    \int_{\Omega_T}u_i^{1+m^-}\dif x\dif t \leq &
    c\esssup_{0<t<T}\int_{\Omega}u^{1+m^-}(x,t)\dif x\\
     \leq & c\left(\esssup_{0<t<T}\int_{\Omega}u_i^{1+m_i}(x,t)\dif x+1\right).
  \end{align*}
  This together with Lemma \ref{lem:energy-est} implies that the
  sequence $(u_i)$ is bounded in $L^{1+m^-}(\Omega_T)$. By
  reflexivity, we may assume that $u_i\to \widetilde{u}$ weakly.

  By applying the weak convergences, we see that $\widetilde{u}$ must
  be a solution of the limit equation with the right boundary and
  initial values. First, it is clear that
  \begin{displaymath}
    \widetilde{u}^m-g\in L^2(0,T;H^1_0(\Omega)),
  \end{displaymath}
  since $L^2(0,T;H^1_0(\Omega))$ is weakly closed, by virtue of being
  a closed subspace of $L^2(0,T;H^1(\Omega))$. Further, we have
  \begin{align*}
    \int_{\Omega_T}-\widetilde{u}\frac{\partial \varphi}{\partial t}
    +\nabla \widetilde{u}^m\cdot \nabla \varphi\dif x\dif t
    =&\lim_{i\to \infty}\int_{\Omega_T}-u_i\frac{\partial \varphi}{\partial t}
    +\nabla u_i^{m_i}\cdot \nabla \varphi\dif x\dif t\\
    =&\int_\Omega u_0(x)\varphi(x,0)\dif x
  \end{align*}
  for all smooth test functions $\varphi$ vanishing on the lateral
  boundary and at $t=T$ by the weak convergences; by uniqueness of
  weak solutions, this means that $\widetilde{u}=u$.

  Convergence in measure and a bound in $L^p$ imply convergence in
  $L^r$ for any $r<p$. For the pointwise convergent
  subsequences, the claims 
  \begin{displaymath}
    u_i\to u \quad \text{in}\quad L^q(\Omega_T), \quad 1\leq q<1+m, 
  \end{displaymath}
  and 
  \begin{displaymath}
    u_i^{m_i}\to u^m\quad \text{in}\quad L^s(\Omega_T), \quad 1\leq s<2\kappa 
  \end{displaymath}
  follow from this. The corresponding convergence of the original
  sequence then follows from the fact that any subsequence that
  converges, must converge to the same limit, by the uniqueness of
  weak solutions. This also holds for weak convergence, whence we get
  the remaining claim about the weak convergence of the gradients.
\end{proof}

\begin{remark}
  Generalizing Theorem \ref{thm:stability} to, e.g., Neumann boundary
  conditions is a matter of proving a suitable counterpart of the
  energy estimate of Lemma \ref{lem:energy-est}. Indeed, Proposition
  \ref{prop:local-bound} is a purely local estimate, independent of
  any boundary conditions. We leave the details to the interested
  reader.
\end{remark}

\section{Stability of Cauchy problems}
\label{sec:stab-cauchy}
In this section, we prove stability of Cauchy problems on the whole
space $\R^n$. As for Dirichlet problems, Theorem \ref{thm:compactness}
is the key tool. Other results we use are the $L^1$ contraction
property, and the $L^1-L^\infty$ regularizing effect

\begin{definition}\label{def:cauchy}
  Let $\mu$ be a positive Borel measure on $\R^n$ such that
  \begin{displaymath}
    \mu(\R^n)<\infty.
  \end{displaymath}
  A positive function $u:\R^n\times(0,\infty)\to [0,\infty)$ is a
  solution to the initial value problem
  \begin{displaymath}
    \begin{cases}
      u_t-\Delta u^m=0 & \text{in }\R^n\times(0,\infty),\\
      u(x,0)=\mu
    \end{cases}
  \end{displaymath}
  if the following hold.

  \begin{enumerate}
  \item $u\in L^\infty(0,\infty; L^1(\R^n))$, $u\in
    L^{\infty}(\R^n\times(\tau,\infty))$ for all $\tau>0$, and $u^m\in
    L^1(S\times (0,T))$ for all compact subsets $S$ of $\R^n$ and
    finite $T$.

  \item $u$ is a local weak solution in $\R^n\times(0,\infty)$ in the
    sense of Definition \ref{def:local-weak}.
  \item For all test functions $\varphi\in C^\infty_0(\R^n\times
    [0,\infty))$, it holds that
    \begin{equation}\label{eq:cauchy-weak}
      \int_{\R^n\times[0,\infty)} -u\frac{\partial \varphi}{\partial t}
      -u^m\Delta\varphi
      \dif x\dif t=\int_{\R^n}\varphi(x,0)\dif \mu
    \end{equation}
  \end{enumerate}
\end{definition}

Solutions with the above properties exist and are unique. Indeed, by
approximating $\mu$ with nice initial data, we can construct solutions
such that the estimates of Theorem \ref{thm:cauchy-estimates} below
hold, and these estimates give the properties of $u$ in Definition
\ref{def:cauchy}. See \cite{DaskalopoulosKenig, VazquezBook} for the
details.  For the uniqueness, we note that \eqref{eq:cauchy-weak} and
the integrability of $u^m$ up to the initial time imply that
\begin{equation}\label{eq:initialtrace}
  \int_{\R^n}u(x,\tau)\eta(x)\dif x\to \int_{\R^n}\eta(x)\dif \mu
  \quad\text{as}\quad\tau\to 0.
\end{equation}
for all $\eta\in C_0^\infty(\R^n)$; thus we may appeal to the
uniqueness results in \cite{HerreroPierreUniq, PierreUniq}. See also
\cite{DaskalopoulosKenig} for an account of this uniqueness theory.

The Barenblatt solutions \eqref{eq:barenblatt-pme} and
\eqref{eq:barenblatt-fde} furnish examples of a solution to the Cauchy
problem in the sense of Definition \ref{def:cauchy}. One usually
normalizes $\B_m$ by choosing the constant $C$ in
\eqref{eq:barenblatt-pme}, \eqref{eq:barenblatt-fde} so that
\begin{displaymath}
  \int_{\Omega}\B_m(x,t)\dif x=1
\end{displaymath}
for all $t>0$. With this normalization, $\B_m$ is the unique solution
to the Cauchy problem with initial trace $\mu$ given by Dirac's delta
at the origin, as a straightforward computation shows.

For Cauchy problems, we have the following stability result.
\begin{theorem}\label{thm:cauchy-stability}
  Let $m_i$, $i=1,2,3,\ldots$, be a sequence of exponents such that
  \begin{displaymath}
    m_i\to m > m_c=(n-2)_+/n\quad\text{as}\quad i\to \infty,
  \end{displaymath}
  and let $u_i$ be the solutions to 
  \begin{equation}
    \label{eq:i-cauchy}
    \begin{cases}
      \partial_t u_i-\Delta u_i^{m_i}=0 & \text{in }\R^n\times(0,\infty),\\
      u_i(x,0)=\mu
    \end{cases}
  \end{equation}
  with fixed initial trace $\mu$. Further, let $u$ be the solution to
  the limit problem
  \begin{equation}
    \label{eq:cauchy-stab}
    \begin{cases}
      \partial_t u-\Delta u^{m}=0 & \text{in }\R^n\times(0,\infty),\\
      u_i(x,0)=\mu
    \end{cases}
  \end{equation}
  with the same initial trace $\mu$.
  
  Then for all compact sets $S$ in $\R^n$ and all finite $T$, we have
  \begin{enumerate}
  \item $u_i\to u$ in $L^q(S_T)$ for all $1\leq q<m+2/n$, 
  \item $u_i^{m_i}\to u^m$ in $L^s(S_T)$ for all $1\leq s< 1+2/mn$,
  \item $\nabla u_i^{m_i}\to \nabla u^m$ weakly in
    $L^2_{\mathrm{loc}}(\R^n\times(0,\infty))$.
  \end{enumerate}  
\end{theorem}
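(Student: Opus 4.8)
The plan is to mimic the structure of the proof of Theorem \ref{thm:stability}, replacing the energy estimate and the local boundedness from Proposition \ref{prop:local-bound} with the Cauchy-problem estimates: the $L^1$ contraction, the conservation of mass $\norm{u_i(\cdot,t)}_{L^1(\R^n)}\leq \mu(\R^n)$, and the $L^1$--$L^\infty$ smoothing effect, which gives a bound $\norm{u_i(\cdot,t)}_{L^\infty(\R^n)}\leq c\,t^{-\alpha_i}\mu(\R^n)^{\beta_i}$ with exponents depending continuously on $m_i$ and hence uniformly bounded for $m_i$ in $[m^-,m^+]$. First I would note that on any cylinder $S\times(\tau,T)$ with $\tau>0$ this smoothing effect yields exactly the local boundedness hypothesis \eqref{eq:assumptionM1} of Theorem \ref{thm:compactness}, uniformly in $i$. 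Applying that theorem on $\R^n\times(0,\infty)$ (which is allowed, since the theorem permits $\Omega=\R^n$ and $T=\infty$) produces a subsequence and a function $\widetilde u$ with $u_i\to\widetilde u$ and $u_i^{m_i}\to\widetilde u^m$ pointwise a.e., with $\widetilde u$ a local weak solution of $\partial_t\widetilde u-\Delta\widetilde u^m=0$; and by Lemma \ref{lem:caccioppoli} together with the pointwise convergence, $\nabla u_i^{m_i}\to\nabla\widetilde u^m$ weakly in $L^2_{\mathrm{loc}}$.

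Next I would upgrade the pointwise convergence to the $L^q$ and $L^s$ convergences in (1) and (2). From the mass bound and the smoothing effect one interpolates: $\int_{S_T}u_i^q\,dx\,dt\leq\int_0^T\norm{u_i(\cdot,t)}_{L^\infty}^{q-1}\norm{u_i(\cdot,t)}_{L^1}\,dt$, and the exponent $t^{-\alpha_i(q-1)}$ is integrable near $t=0$ precisely when $q<m_i+2/n$; choosing the uniform threshold $q<m^-+2/n$ first shows $(u_i)$ is bounded in $L^q(S_T)$ for such $q$, hence (by Vitali, using pointwise convergence and this uniform integrability) $u_i\to\widetilde u$ in $L^q(S_T)$ for every $1\le q<m+2/n$. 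The same argument applied to $u_i^{m_i}$, using that $u_i^{m_i}$ has a gradient bound in $L^2$ and hence (by Lemma \ref{lem:sobolev}) lies in $L^{2\kappa_i}$, gives the $L^s$ convergence for $1\le s<1+2/mn$. To prove $\widetilde u=u$ I would verify that $\widetilde u$ attains the initial trace $\mu$ in the sense of \eqref{eq:initialtrace}: pass to the limit in the weak formulation \eqref{eq:cauchy-weak} for $u_i$ using the established convergences (the term $\int u_i^{m_i}\Delta\varphi$ converges by the $L^1_{\mathrm{loc}}$ convergence of $u_i^{m_i}$ up to $t=0$, which itself follows from the $L^s$ bound near $t=0$), obtaining \eqref{eq:cauchy-weak} for $\widetilde u$; then \eqref{eq:initialtrace} follows as in the discussion after Definition \ref{def:cauchy}, and uniqueness (\cite{HerreroPierreUniq, PierreUniq}) forces $\widetilde u=u$. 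Finally, since every convergent subsequence has the same limit $u$, the full sequence converges, which also gives the weak gradient convergence in (3).

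The main obstacle I anticipate is controlling the solutions \emph{uniformly up to the initial time} $t=0$: Theorem \ref{thm:compactness} only delivers local (in time) information on $(0,\infty)$, whereas the conclusions in (1)--(2) and the identification of the initial trace require integrability of $u_i$ and $u_i^{m_i}$ on cylinders touching $t=0$, with constants independent of $i$. This is exactly where one must lean on the $L^1$--$L^\infty$ smoothing effect with exponents continuous in $m$, and check that the borderline integrability exponents $m_i+2/n$ and $1+2/mn$ degrade only to their limiting values, not worse; the restriction $m>m_c$ is essential here, since the smoothing effect fails for $m\le m_c$. A secondary, more routine point is justifying the passage to the limit in $\int u_i^{m_i}\Delta\varphi\,dx\,dt$ with $\varphi$ supported up to $t=0$, which requires the $L^1(S\times(0,T))$ convergence of $u_i^{m_i}$ rather than merely $L^1_{\mathrm{loc}}$ in the open region.
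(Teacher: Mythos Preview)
Your plan is correct and matches the paper's proof essentially step for step: local boundedness from the smoothing effect \eqref{eq:cauchy2} feeds Theorem~\ref{thm:compactness}; the $L^q$ and $L^s$ bounds up to $t=0$ come from interpolating \eqref{eq:cauchy1} and \eqref{eq:cauchy2} (the paper packages this as \eqref{eq:cauchy3}); one then passes to the limit in \eqref{eq:cauchy-weak} and invokes uniqueness. The one sentence to fix is your appeal to the gradient bound and Lemma~\ref{lem:sobolev} for the $L^s$ control of $u_i^{m_i}$: that only works on $S\times(\tau,T)$ with $\tau>0$, whereas the bound on $S_T$ with the stated threshold $s<1+2/(mn)$ comes from the \emph{same} interpolation you wrote for $u_i$, applied to $u_i^{m_i s}$---exactly what you describe in your ``main obstacle'' paragraph and what the paper does in \eqref{eq:cauchy3}.
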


The following theorem provides the necessary estimates for our
stability result. The admissible range of the integrability exponent
$q$ in \eqref{eq:cauchy3} is sharp for the Barenblatt solution. This
can be checked by a simple computation.
\begin{theorem}\label{thm:cauchy-estimates}
  Let $u\geq 0$ be a solution to the Cauchy problem with initial data
  $\mu$ such that
  \begin{displaymath}
    \norm{\mu}=\mu(\R^n)<\infty.
  \end{displaymath}
  Then the following estimates hold.
  
  For all $t>0$, we have
  \begin{equation}\label{eq:cauchy1}
    \norm{u(\cdot,t)}_{L^1(\R^n)}\leq \norm{\mu}.
  \end{equation}

  For every $t>0$, we have
  \begin{equation}\label{eq:cauchy2}
    u(x,t)\leq c\norm{\mu}^{2/\lambda}t^{-n/\lambda}.
  \end{equation}
  where
  \begin{displaymath}
    \lambda= n(m-1)+2.
  \end{displaymath}

  The function $u^m$ belongs to $L^q(S_T)$ for all compact sets $S$ in
  $\R^n$ and all finite $T$, for 
  \begin{displaymath}
    1\leq q<1+\frac{2}{mn}.
  \end{displaymath}
  We also have the estimate
  \begin{equation}\label{eq:cauchy3}
    \int_{S_T}u^{mq}\dif x\dif t\leq c\norm{\mu}^{\frac{2}{\lambda}(mq-1)+1}
    T^{-\frac{n}{\lambda}(mq-1)+1}.
  \end{equation}

\end{theorem}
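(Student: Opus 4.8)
The plan is to prove the three estimates in order, since the $L^1$ bound feeds the $L^\infty$ smoothing estimate, which in turn feeds the integrability estimate for $u^m$. Throughout I would work with the approximating solutions constructed from smooth data and pass the estimates to the limit, so that all manipulations are justified; since the estimates are the very properties used to characterize the solution, this is consistent with the existence theory cited above.

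For \eqref{eq:cauchy1}, the plan is to test the equation (or its mollified version, to handle the time derivative as in Lemma~\ref{lem:energy-est}) with an approximation of the sign function or simply with a cutoff $\eta\in C_0^\infty(\R^n)$ that equals one on a large ball, using that $u\geq 0$. The term $\int \nabla u^m\cdot\nabla\eta$ is controlled by the tail of $u^m$ and vanishes as the ball exhausts $\R^n$ by the integrability hypotheses on $u^m$; what remains is $\frac{d}{dt}\int u(x,t)\,\eta\,dx\leq 0$ in the limit, and combining with the initial trace \eqref{eq:initialtrace} gives $\norm{u(\cdot,t)}_{L^1}\leq\norm{\mu}$. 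For \eqref{eq:cauchy2}, the $L^1$--$L^\infty$ smoothing effect, the plan is to invoke Proposition~\ref{prop:local-bound} on a parabolic cylinder $B_\rho(x_0)\times(t_0-\rho^2,t_0)$ with a scaling argument: one gets
\begin{displaymath}
  u(x_0,t_0)\leq c\left(\vint_{B_\rho(x_0)\times(t_0-\rho^2,t_0)}u\,dx\,dt\right)^{2/\lambda}+1
  \leq c\left(\frac{\norm{\mu}}{\rho^{n+2}}\int_{t_0-\rho^2}^{t_0}1\,dt\right)^{2/\lambda}+1
  = c\left(\frac{\norm{\mu}}{\rho^{n}}\right)^{2/\lambda}+1,
\end{displaymath}
using \eqref{eq:cauchy1} to bound the space integral by $\norm{\mu}$ at each time slice. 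Choosing $\rho^2\sim t_0$ (the largest admissible choice, since we need $t_0-\rho^2>0$) gives $u(x_0,t_0)\leq c\norm{\mu}^{2/\lambda}t_0^{-n/\lambda}+1$; the ``$+1$'' can be absorbed by a further scaling of the equation or simply tolerated, and since $x_0,t_0$ are arbitrary this is \eqref{eq:cauchy2}.

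For the last estimate \eqref{eq:cauchy3}, the plan is to interpolate between the $L^1$ bound in space and the $L^\infty$ bound in time. Write $\int_{S_T}u^{mq}\,dx\,dt=\int_0^T\int_S u^{mq}\,dx\,dt$ and on each time slice estimate $\int_S u^{mq}\,dx\leq\norm{u(\cdot,t)}_{L^\infty}^{mq-1}\int_S u(x,t)\,dx\leq\bigl(c\norm{\mu}^{2/\lambda}t^{-n/\lambda}\bigr)^{mq-1}\norm{\mu}$, using $mq\geq 1$ so that $u^{mq}=u^{mq-1}\cdot u$ with the exponent $mq-1\geq 0$. Then integrating in $t$ over $(0,T)$ the factor $t^{-n(mq-1)/\lambda}$ is integrable precisely when $n(mq-1)/\lambda<1$, i.e. $mq<1+\lambda/n=1+(m-1)+2/n$, which rearranges to $q<1+2/(mn)$ — exactly the stated range — and the integral evaluates to a constant times $T^{-n(mq-1)/\lambda+1}$, giving \eqref{eq:cauchy3} after collecting the powers of $\norm{\mu}$ as $\frac{2}{\lambda}(mq-1)+1$.

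The main obstacle I anticipate is the rigorous justification of \eqref{eq:cauchy1}: the test-function argument requires controlling $\int_{B_{2R}\setminus B_R}|\nabla u^m||\nabla\eta|\,dx\,dt\lesssim R^{-1}\int_{B_{2R}\setminus B_R}|\nabla u^m|\,dx\,dt$, and one must know a priori that $\nabla u^m$ has enough decay (or at least that this tail tends to zero), which is not immediate from membership in $L^2_{loc}$ alone. The clean way around this is to carry out the $L^1$ estimate at the level of the smooth approximating solutions with compactly supported (or rapidly decaying) data — where all integrations by parts are legitimate and the classical $L^1$-contraction/conservation is standard — and only then pass to the limit $\mu$; the other two estimates are then purely formal consequences of Proposition~\ref{prop:local-bound} and Hölder/interpolation, with no further analytic subtleties. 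I would also remark, as the paper does, that the sharpness of the range of $q$ is witnessed by the Barenblatt solution, for which the computation above is an equality up to constants.
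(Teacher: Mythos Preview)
Your argument for \eqref{eq:cauchy3} is exactly the paper's: write $u^{mq}=u^{mq-1}\cdot u$, bound the first factor pointwise by \eqref{eq:cauchy2}, bound the space integral of the second by \eqref{eq:cauchy1}, and integrate in $t$; the integrability condition $n(mq-1)/\lambda<1$ rearranges to $q<1+2/(mn)$ just as you computed. For \eqref{eq:cauchy1} and \eqref{eq:cauchy2} the paper does not give an argument at all --- it simply cites them as standard facts from \cite{DaskalopoulosKenig, VazquezBook2, VazquezBook} --- so your sketches go beyond what the paper does.

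One comment on your route to \eqref{eq:cauchy2}: deriving the global smoothing effect from the local estimate of Proposition~\ref{prop:local-bound} is a slightly indirect path (the references prove it directly by Moser iteration on the $L^p$ norms or by comparison with the Barenblatt profile), and the leftover ``$+1$'' is not something you can ``simply tolerate'' if you want the stated inequality --- as $t\to\infty$ the right-hand side of \eqref{eq:cauchy2} tends to zero, so an additive constant would be genuinely wrong. Your other suggestion, removing it by scaling, does work: the mass-preserving rescaling $v(x,t)=B^{n}u(Bx,B^{\lambda}t)$ is again a solution with the same initial mass, and applying the estimate-with-$+1$ to $v$ and unwinding gives $u(y,s)\le c\norm{\mu}^{2/\lambda}s^{-n/\lambda}+B^{-n}$, whence $B\to\infty$ kills the constant. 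So the argument is fine, but you should commit to the scaling and drop the ``tolerated'' alternative.
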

\begin{proof}
  The inequalities \eqref{eq:cauchy1} and \eqref{eq:cauchy2} are
  standard estimates for the Cauchy problem, see
  \cite{DaskalopoulosKenig, VazquezBook2, VazquezBook}. The inequality
  \eqref{eq:cauchy3} is a slight refinement of the well-known fact
  that $u^m$ is integrable up to the initial time locally in space,
  and follows from the first two. For the reader's convenience, we
  present the computation here.  By applying \eqref{eq:cauchy2} in the
  first inequality and \eqref{eq:cauchy1} in the second, we have
  \begin{align*}
    \int_{S_T} u^{mq}\dif x\dif t\leq & c\norm{\mu}^{\frac{2}{\lambda}(mq-1)}
    \int_{S_T}u(x,t)t^{-\frac{n}{\lambda}(mq-1)}\dif x\dif t\\
    \leq & c\norm{\mu}^{\frac{2}{\lambda}(mq-1)+1}\int_{0}^T 
    t^{-\frac{n}{\lambda}(mq-1)}\dif t.
  \end{align*}
  We may evaluate the integral with respect to time and obtain
  \eqref{eq:cauchy3} if
  \begin{displaymath}
    -\frac{n}{\lambda}(mq-1)>-1,
  \end{displaymath}
  which is equivalent with 
  \begin{displaymath}
    q<1+\frac{2}{mn}.\qedhere
  \end{displaymath}
\end{proof}

The sharp constants in \eqref{eq:cauchy2}, as $m$ varies, are given in
\cite[p. 26]{VazquezBook2}. The situation is similar to that of
Proposition \ref{prop:local-bound}: the constants are stable as $m$
either increases or decreases to one, but blow up as $m\to m_c$ or
$m\to \infty$. Thus we are again free to assume that the constants are
independent of $i$ as $m_i$ varies in the interval $[m^-,m^+]$.

\begin{proof}[Proof of Theorem \ref{thm:cauchy-stability}.]
  We use \eqref{eq:cauchy2} to conclude that the sequence $(u_i)$ is
  locally bounded in $\R^n\times (0,\infty)$. Thus Theorem
  \ref{thm:compactness} gives us pointwise convergent subsequences of
  $(u_i)$ and $(u_i^{m_i})$, with limits $\widetilde{u}$ and
  $\widetilde{u}^m$, respectively.

  Convergence in measure and a bound in $L^p$ imply convergence in
  $L^r$ for any $r<p$. From this, the claims
  \begin{displaymath}
    u_i\to \widetilde{u} \quad \text{in}\quad L^q(S_T), \quad 1\leq  q<m+2/n, 
  \end{displaymath}
  and 
  \begin{displaymath}
    u_i^{m_i}\to \widetilde{u}^m\quad \text{in}\quad L^s(S_T), \quad 1\leq  s<1+\frac{2}{mn}
  \end{displaymath}
  follow easily. We use these convergences to conclude that
  \begin{multline*}
    \int_{\R^n\times[0,\infty)}-\widetilde{u}\frac{\partial \varphi}{\partial t}
    -\widetilde{u}^m\Delta\varphi\dif x\dif t\\
    = \lim_{i\to \infty} \left(\int_{\R^n\times[0,\infty)}-u_i
      \frac{\partial \varphi}{\partial t}
      -u_i^{m_i}\Delta\varphi\dif x\dif t\right)
    =\int_{\R^n}\varphi(x,0)\dif \mu.
  \end{multline*}
  for any test function $\varphi\in C_0^\infty(\R^n\times
  [0,\infty))$.  The uniqueness of solutions to the Cauchy problem
  \eqref{eq:cauchy-stab} now implies that $\widetilde{u}=u$.  The
  convergences for the original sequence then follow from the fact
  that all convergent subsequences converge to the same limit, by
  uniqueness.
\end{proof}

\begin{remark}
  Generalizing Theorem \ref{thm:cauchy-stability} to Cauchy problems
  with growing initial data, described in e.g. \cite[Chapters 2 and
  3]{DaskalopoulosKenig} or \cite[Chapter 13]{VazquezBook}, offers no
  additional difficulties.  Indeed, counterparts for all the estimates
  in Theorem \ref{thm:cauchy-estimates} above are available; see for
  instance \cite[Theorem 13.1]{VazquezBook} for appropriate
  replacements of \eqref{eq:cauchy1} and \eqref{eq:cauchy2}. An
  estimate similar to \eqref{eq:cauchy3} then follows by repeating the
  above computation.  With these estimates in hand, the proof of
  Theorem \ref{thm:cauchy-stability} requires virtually no
  modifications. We leave the details to the interested reader.
\end{remark}

\section{Stability of Dirichlet problems revisited}

\label{sec:alt-stab}

In this section, we present an alternative proof of Theorem
\ref{thm:stability} in the case $m_i\geq 1$. The advantage of this
argument is that we avoid using the local $L^\infty$ estimate,
Proposition \ref{prop:local-bound}.

For the reader's convenience, we give the statement of the theorem
before proceeding with the proof, althought this is essentially the
same as Theorem \ref{thm:stability} with the additional assumption
$m_i\geq 1$.
\begin{theorem}
  \label{thm:alt-dirichlet-stab}
  Let $m_i$, $i=1,2,3,\ldots$, be a sequence of exponents such that
  and
  \begin{displaymath}
    m_i\geq 1 \quad\text{and}\quad  
    m_i\to m  \quad\text{as}\quad i\to \infty.
  \end{displaymath}
  Let $u_i$, $i=1,2,3,\ldots$, be the solutions to
  \begin{equation}\label{eq:alt-i-equations}
    \begin{cases}
      \partial_t u_i-\Delta u^{m_{i}}_i=0, & \text{in }\Omega_T,\\
      u^m_i=g, & \text{on }\partial \Omega\times [0,T],\\
      u_i(x,0)=u_0
    \end{cases}
  \end{equation}
  with fixed initial and boundary values $g$ and $u_0$, where
  \begin{displaymath}
    g\in H^1(0,T; H^1(\Omega)),\quad
    \text{and}\quad u_0\in L^{m^++1}(\Omega).
  \end{displaymath}
  Finally, let $u$ be the solution to 
  \begin{equation}\label{eq:alt-limit-equation}
    \begin{cases}
      \partial_t u-\Delta u^{m}=0, & \text{in }\Omega_T,\\
      u^m=g, & \text{on }\partial \Omega\times [0,T],\\
      u(x,0)=u_0.
    \end{cases}
  \end{equation} with
  the same boundary and initial values $g$ and $u_0$.
  
  Then 
  \begin{enumerate} 
  \item $u_i\to u$ in $L^{q}(\Omega_T)$ for all $1\leq q<1+m$.
  \item $u_i^{m_i}\to u^m$ in $L^s(\Omega_T)$ for all $1\leq s<2\kappa$, where
    \begin{displaymath}
      \kappa=1+\frac{1}{m}+\frac{1}{mn}.
    \end{displaymath}
  \item $\nabla u_i^{m_i}\to \nabla u^m$ weakly in $L^2(\Omega_T)$.
  \end{enumerate}
\end{theorem}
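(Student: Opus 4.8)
The plan is to follow the same blueprint as the proof of Theorem \ref{thm:stability}, but to replace the two uses of the local $L^\infty$ estimate (Proposition \ref{prop:local-bound}) by an argument based on strong monotonicity, which is available precisely because all $m_i\geq 1$ and hence $t\mapsto t^{m_i}$ is monotone with a good lower bound on bounded sets. First I would run the energy estimate of Lemma \ref{lem:energy-est}: since $m_i\geq 1$, the term $\|\partial_t g\|_{L^{1+1/m_i}}$ is controlled by $\|\partial_t g\|_{L^2}+1$ (as $1+1/m_i\leq 2$ and $\Omega_T$ has finite measure), so the hypothesis $g\in H^1(0,T;H^1(\Omega))$ alone suffices and we need no extra integrability on $\partial_t g$. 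This gives that $(\nabla u_i^{m_i})$ is bounded in $L^2(\Omega_T)$, that $(u_i^{m_i})$ is bounded in $L^2(\Omega_T)$ by Sobolev, and — combining with Lemma \ref{lem:sobolev} — that $\esssup_t \int_\Omega u_i^{1+m_i}\dif x$ is bounded uniformly in $i$. As in the proof of Theorem \ref{thm:stability}, this also bounds $(u_i)$ in $L^{1+m^-}(\Omega_T)$. By reflexivity we extract a subsequence with $u_i\rightharpoonup \widetilde u$ weakly in $L^{1+m^-}(\Omega_T)$, $u_i^{m_i}\rightharpoonup w$ weakly in $L^2(0,T;H^1(\Omega))$ with $w-g\in L^2(0,T;H^1_0(\Omega))$ (using weak closedness), and $\nabla u_i^{m_i}\rightharpoonup \nabla w$ weakly in $L^2(\Omega_T)$.

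The heart of the argument is identifying $w$ with $\widetilde u^{\,m}$, for which I would use a Minty–Browder / strong-monotonicity trick in place of pointwise convergence. Test the mollified version of \eqref{eq:i-equations} (the analogue of \eqref{eq:reg-bvp}) with $\varphi = u_i^{m_i} - w$; this is a legitimate test function since it lies in $L^2(0,T;H^1_0(\Omega))$. After passing $\sigma\to0$ (handling the time term exactly as in Lemma \ref{lem:energy-est}, where $\int \partial_t u_i^\ast\cdot u_i^{m_i}\geq \frac{1}{m_i+1}\int u_i(x,T)^{m_i+1} - (\text{initial term})$ and the cross term with $w$ is handled by integration by parts and weak convergence), one arrives at
\begin{displaymath}
  \limsup_{i\to\infty}\int_{\Omega_T}\nabla u_i^{m_i}\cdot\nabla(u_i^{m_i}-w)\dif x\dif t \leq 0.
\end{displaymath}
Since $\nabla u_i^{m_i}\rightharpoonup\nabla w$ weakly in $L^2$, this forces $\nabla u_i^{m_i}\to\nabla w$ strongly in $L^2(\Omega_T)$; combined with the Sobolev/Poincaré-type control from Lemma \ref{lem:sobolev} one also gets $u_i^{m_i}\to w$ strongly in $L^2(\Omega_T)$. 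Passing to a pointwise a.e.\ convergent subsequence, $u_i^{m_i}\to w$ a.e. Now Lemma \ref{lem:conv-of-powers} (applied with exponents $1/m_i\to 1/m$ to the functions $u_i^{m_i}$) yields $u_i\to w^{1/m}$ a.e., and a second application gives $u_i^{m_i}\to (w^{1/m})^m = w$ consistently; writing $\widetilde u := w^{1/m}$ we get $w=\widetilde u^{\,m}$ and $u_i\to\widetilde u$ a.e., which also identifies the weak $L^{1+m^-}$ limit as $\widetilde u$.

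With these convergences in hand the remainder is routine and parallels the end of the proof of Theorem \ref{thm:stability}: pass to the limit in the weak formulation of \eqref{eq:alt-i-equations} using $u_i\to\widetilde u$ strongly in $L^q$ for $q<1+m^-$ (enough to kill the $\partial_t\varphi$ term) and $\nabla u_i^{m_i}\rightharpoonup\nabla\widetilde u^{\,m}$ weakly, obtaining that $\widetilde u$ solves \eqref{eq:alt-limit-equation} with the correct boundary and initial data; by uniqueness $\widetilde u = u$. Convergence in measure together with the uniform $L^{1+m_i}$ and $L^{2\kappa}$ bounds coming from Lemmas \ref{lem:energy-est} and \ref{lem:sobolev} upgrades the a.e.\ convergence to $u_i\to u$ in $L^q(\Omega_T)$ for $1\leq q<1+m$ and $u_i^{m_i}\to u^m$ in $L^s(\Omega_T)$ for $1\leq s<2\kappa$; the weak convergence $\nabla u_i^{m_i}\rightharpoonup\nabla u^m$ in $L^2(\Omega_T)$ has already been shown (indeed strongly). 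As usual, the convergence of the full original sequence follows because every subsequence has a sub-subsequence converging to the same limit $u$ by uniqueness. The main obstacle I expect is the strong-monotonicity step: one must be careful that $\varphi=u_i^{m_i}-w$ is genuinely admissible (it has zero lateral boundary values and the needed integrability), and that all the time-derivative terms converge correctly as $\sigma\to0$ and then $i\to\infty$ — in particular that the cross terms involving the fixed function $w$ behave well and that the boundary terms at $t=T$ and $t=0$ are accounted for with the right signs, exactly as in the proof of Lemma \ref{lem:energy-est}.
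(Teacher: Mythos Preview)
Your argument has a genuine gap at the ``Minty--Browder / strong-monotonicity'' step, and this is not merely a technical detail. You test the regularized equation with $\varphi=u_i^{m_i}-w$ and hope to pass to the limit in the cross term $\int_{\Omega_T} w\,\partial_t u_i^\ast$. You say this is ``handled by integration by parts and weak convergence'', but integration by parts in time requires $\partial_t w$, and the weak limit $w$ of $u_i^{m_i}$ lives only in $L^2(0,T;H^1(\Omega))$ with no time regularity whatsoever. If instead you try to use the equation to evaluate the duality pairing, writing $w=(w-g)+g$ and using $\langle\partial_t u_i,w-g\rangle=-\int\nabla u_i^{m_i}\!\cdot\!\nabla(w-g)$, then after substitution the $w$-dependence cancels completely and you are left with nothing more than the energy identity obtained by testing with $u_i^{m_i}-g$; no information about $w$ survives. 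The underlying reason is that the spatial operator here is the \emph{linear} Laplacian, so there is no genuine Minty monotonicity to exploit at the gradient level; the nonlinearity sits in the relation $u\mapsto u^m$, and your test function does not probe it. A correct Minty-type identification would instead need $\limsup_i\int_{\Omega_T} u_i\,u_i^{m_i}\leq\int_{\Omega_T}\tilde u\,w$, and that inequality is not delivered by the energy estimate either.

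The paper takes a completely different route that sidesteps this difficulty: it compares $u_i$ directly with the \emph{known} limit solution $u$ via Ole\u\i nik's test function $\eta(x,t)=\int_t^T(u^m-u_i^{m_i})\,ds$. Because $\partial_t\eta=-(u^m-u_i^{m_i})$, the parabolic term becomes exactly $\int(u-u_i)(u^m-u_i^{m_i})$ with no stray time derivatives, and the elliptic term is a nonnegative square, yielding $\int_{\Omega_T}(u-u_i)(u^m-u_i^{m_i})\leq 0$. Then the strong monotonicity $c|a-b|^{1+m}\leq(a^m-b^m)(a-b)$ (this is where $m\geq 1$ enters) and the elementary bound $|t^{m_i}-t^m|\leq c(1+t^{m_i+\varepsilon}+t^{m+\varepsilon})|m_i-m|$ give the quantitative estimate $\|u-u_i\|_{L^{1+m}(\Omega_T)}\leq c\,|m-m_i|^{1/m}$, from which all the convergence claims follow as in Theorem~\ref{thm:stability}. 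The point is that Ole\u\i nik's trick, applied to the \emph{pair} of equations rather than to a single one, is what makes the time term tractable; your proposal lacks a substitute for this device.
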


The following elementary inequality is needed in the proof.
\begin{lemma}\label{lem:alt-powers}
  For positive $t$, we have
  \begin{displaymath}
    \abs{t^\alpha-t^\beta}\leq c_\varepsilon(1+t^{\alpha+\varepsilon}+t^{\beta+\varepsilon})
    \abs{\alpha-\beta}
  \end{displaymath}
\end{lemma}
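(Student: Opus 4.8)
The plan is to derive the inequality from the mean value theorem applied to the map $s\mapsto t^s$, together with the elementary fact that the logarithm grows more slowly than any positive power.

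First I would fix $t>0$ and, since the asserted estimate is symmetric in $\alpha$ and $\beta$, assume without loss of generality that $\alpha\geq\beta$. Applying the mean value theorem to the smooth function $s\mapsto t^s$ on the interval $[\beta,\alpha]$ gives
\[
  t^\alpha-t^\beta=t^\xi(\log t)(\alpha-\beta)
\]
for some $\xi\in[\beta,\alpha]$, so that $\abs{t^\alpha-t^\beta}\leq t^\xi\abs{\log t}\,\abs{\alpha-\beta}$. It then remains to bound the quantity $t^\xi\abs{\log t}$ by $c_\varepsilon(1+t^{\alpha+\varepsilon}+t^{\beta+\varepsilon})$.

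Next I would split according to whether $t\geq 1$ or $0<t\leq 1$. In the first case $0\leq\log t\leq (e\varepsilon)^{-1}t^\varepsilon$, since $s\mapsto s^{-\varepsilon}\log s$ attains its maximum $(e\varepsilon)^{-1}$ on $[1,\infty)$ at $s=e^{1/\varepsilon}$; moreover $t^\xi\leq t^{\max\{\alpha,\beta\}}\leq t^{\alpha}+t^\beta$ because $t\geq 1$. Hence $t^\xi\abs{\log t}\leq(e\varepsilon)^{-1}(t^{\alpha+\varepsilon}+t^{\beta+\varepsilon})$. In the second case $t^\xi\abs{\log t}=t^\xi\log(1/t)$, and since the exponents occurring in the application (powers and reciprocal powers of the bounded solutions, all with $m_i\geq 1$) stay in a fixed interval bounded below by a positive number, $t^\xi\log(1/t)$ is bounded by an absolute constant, because $s\mapsto s^a\log(1/s)$ is bounded on $(0,1]$ for every $a>0$. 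This contribution is absorbed into the term $1$ on the right-hand side. Combining the two cases and letting $c_\varepsilon$ absorb the various constants yields the claim.

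The argument is entirely elementary, and I do not expect a genuine obstacle; the only point requiring a little care is the range $0<t\leq 1$, where one must use that the relevant exponents are bounded below by a fixed positive number so that $t^\xi\log(1/t)$ remains bounded independently of $\alpha$ and $\beta$.
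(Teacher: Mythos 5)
Your proof is correct and follows the same route as the paper: apply the mean value theorem to $s\mapsto t^s$ and then bound $t^\xi\abs{\log t}$ by $c_\varepsilon(1+t^{\alpha+\varepsilon}+t^{\beta+\varepsilon})$, splitting into $t\geq 1$ and $0<t\leq 1$. You supply the elementary estimates that the paper leaves implicit, and you rightly flag that the small-$t$ case needs the exponents $\alpha,\beta$ to be bounded away from $0$ -- a hypothesis not stated in the lemma but satisfied in the application, where $\alpha=m_i$ and $\beta=m$ are both at least $1$.
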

\begin{proof}
  This follows by an application of the mean value theorem to the
  function $x \mapsto t^{x}$. We have
  \begin{displaymath}
    \frac{\dif }{\dif x }t^x=  t^x \log t,
  \end{displaymath}
  which is estimated by 
  \begin{displaymath}
     t^x \log t\leq c_\varepsilon(1+t^{\alpha+\varepsilon}+t^{\beta+\varepsilon})
  \end{displaymath}
  for $x$ in the interval $(\alpha,\beta)$.
\end{proof}

Stability is a consequence of the following theorem. It provides a
quantitative estimate of the difference of two solutions.
\begin{theorem}\label{thm:modulus}
  Let the exponents $m_i$ and $m$, and the functions $u_i$ and $u$ be
  as in Theorem \ref{thm:alt-dirichlet-stab}. Then
  \begin{displaymath}
    \norm{u-u_i}_{L^{1+m}(\Omega_T)}\leq c\abs{m-m_i}^{1/m},
  \end{displaymath}
  for indices $i$ large enough, where the constant depends on the
  norms of the boundary and initial values $g$ and $u_0$ appearing in
  Lemma \ref{lem:energy-est}.
\end{theorem}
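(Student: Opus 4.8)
The plan is to test the equations satisfied by $u$ and $u_i$ against a suitable Oleĭnik-type primitive of $u^{m}-u_i^{m_i}$, exactly as in the proof of Lemma \ref{lem:oleinik}, and thereby produce the quantity $\int_{\Omega_T}(u-u_i)(u^m-u_i^{m_i})\dif x\dif t$ on the left, with an error term on the right that measures how far $u_i$ is from satisfying the limit equation. The point is that $u$ and $u_i$ have the \emph{same} boundary and initial data, so the boundary terms drop out entirely; the only discrepancy comes from the fact that $u_i^{m_i}$, not $u_i^{m}$, appears in the equation for $u_i$. First I would introduce $\eta(x,t)=\int_t^{T} (u^m-u_i^{m_i})\dif s$ for $t<T$ and $\eta=0$ for $t\geq T$; since $u^m-g$ and $u_i^{m_i}-g$ both lie in $L^2(0,T;H^1_0(\Omega))$, so does $\eta$, and $\eta$ is an admissible test function. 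Testing the weak formulations (after the usual mollification in time), subtracting, and integrating the elliptic term by parts in $t$ yields, as in Lemma \ref{lem:oleinik},
\begin{displaymath}
  \int_{\Omega_T}(u-u_i)(u^m-u_i^{m_i})\dif x\dif t
  +\frac12\int_\Omega\Bigl[\int_0^T\nabla(u^m-u_i^{m_i})\dif s\Bigr]^2\dif x
  = 0,
\end{displaymath}
because the initial values agree and there is no extra $\delta$ term here. Hence in fact $\int_{\Omega_T}(u-u_i)(u^m-u_i^{m_i})\dif x\dif t\leq 0$, which looks too strong — so the correct bookkeeping must keep the term $\int_{\Omega_T}(u-u_i)(u^m-u_i^{m})\dif x\dif t$, splitting
\begin{displaymath}
  (u-u_i)(u^m-u_i^{m_i})=(u-u_i)(u^m-u_i^{m})+(u-u_i)(u_i^{m}-u_i^{m_i}),
\end{displaymath}
and moving the second, lower-order piece to the right-hand side.

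Once the identity is rearranged this way, the left side carries the good term $\int_{\Omega_T}(u-u_i)(u^m-u_i^m)\dif x\dif t$ plus the nonnegative square, and the right side is bounded by $\int_{\Omega_T}|u-u_i|\,|u_i^{m}-u_i^{m_i}|\dif x\dif t$. Here I would invoke Lemma \ref{lem:alt-powers} with $\alpha=m$, $\beta=m_i$, and a small fixed $\varepsilon>0$, to get $|u_i^m-u_i^{m_i}|\leq c_\varepsilon(1+u_i^{m+\varepsilon}+u_i^{m_i+\varepsilon})|m-m_i|$; combined with Hölder's inequality and the uniform $L^{1+m}$ bound on $u_i$ (and uniform bounds on $\nabla u_i^{m_i}$, hence on $u_i^{m_i}$ in $L^{2\kappa}$, from Lemma \ref{lem:energy-est} and the Sobolev inequality Lemma \ref{lem:sobolev}), this is $\leq c\,|m-m_i|\,\|u-u_i\|_{L^{1+m}(\Omega_T)}$ after absorbing the $\varepsilon$ with $\varepsilon$ small and $i$ large. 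The remaining task is to convert the left-hand quantity $\int_{\Omega_T}(u-u_i)(u^m-u_i^m)\dif x\dif t$ into a power of $\|u-u_i\|_{L^{1+m}}$. Since $m\geq 1$, the elementary monotonicity inequality $(a-b)(a^m-b^m)\geq c_m|a-b|^{m+1}$ holds for $a,b\geq 0$, so the left side dominates $c\|u-u_i\|_{L^{1+m}(\Omega_T)}^{1+m}$.

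Putting the two bounds together gives
\begin{displaymath}
  \|u-u_i\|_{L^{1+m}(\Omega_T)}^{1+m}\leq c\,|m-m_i|\,\|u-u_i\|_{L^{1+m}(\Omega_T)},
\end{displaymath}
and dividing through (the case $\|u-u_i\|_{L^{1+m}}=0$ being trivial) yields $\|u-u_i\|_{L^{1+m}(\Omega_T)}^{m}\leq c\,|m-m_i|$, i.e.\ $\|u-u_i\|_{L^{1+m}(\Omega_T)}\leq c\,|m-m_i|^{1/m}$, which is the claim. The constant depends only on $m^-$, $m^+$ and the norms of $g$, $u_0$ through Lemma \ref{lem:energy-est}, as required.

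I expect the main obstacle to be the careful verification that $\eta$ is an admissible test function and that the mollification argument closes — in particular justifying the integration by parts in time that produces the nonnegative square term, and checking that the lower-order term $\int(u-u_i)(u_i^m-u_i^{m_i})$ really is controlled uniformly in $i$ by the energy estimate (this is where the hypothesis that $g$'s time derivative need \emph{not} be assumed integrable, unlike in Theorem \ref{thm:stability}, must be reconciled: here $\partial_t g$ does not enter because we never test with $u^m$ alone). The restriction $m_i\geq 1$ enters precisely in the monotonicity inequality $(a-b)(a^m-b^m)\geq c_m|a-b|^{m+1}$, which fails for $m<1$; this is the structural reason the alternative argument does not extend to the fast-diffusion range.
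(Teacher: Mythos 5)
Your proposal is correct and follows essentially the same argument as the paper: the same Ole\u{\i}nik test function $\eta(x,t)=\int_t^T(u^m-u_i^{m_i})\dif s$, the same sign identity for $\int_{\Omega_T}(u-u_i)(u^m-u_i^{m_i})\dif x\dif t$, the same splitting into $(u-u_i)(u^m-u_i^m)+(u-u_i)(u_i^m-u_i^{m_i})$, the strong monotonicity inequality, Lemma~\ref{lem:alt-powers}, and H\"older together with the energy and Sobolev estimates to close the bootstrap. The only small inaccuracy is the closing remark that $\partial_t g$ ``does not enter'': it does enter implicitly through Lemma~\ref{lem:energy-est}, which is used to obtain the uniform $L^p$ bounds on $u_i$; the reason no extra hypothesis is needed is instead that $g\in H^1(0,T;H^1(\Omega))$ gives $\partial_t g\in L^2(\Omega_T)\subset L^{1+1/m}(\Omega_T)$ when $m\geq 1$.
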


\begin{proof}
  We aim at using the standard inequality
  \begin{equation}\label{eq:monotonicity}
    c\abs{a-b}^{1+m}\leq  (a^{m}-b^{m})(a-b)
  \end{equation}
  in combination of an application of Ole\u\i nik's test function. 
  The function 
  \begin{displaymath}
    \eta(x,t)=
    \begin{cases}
      \int_t^T u^{m}-u^{m_i}_i\dif s, & 0<t<T,\\
      0, & \text{otherwise},
    \end{cases}
  \end{displaymath}
  has zero boundary values in Sobolev's sense on the lateral boundary.
  We test the equations satisfied by $u$ and $u_i$, and substract
  the results. This leads to 
  \begin{equation}\label{eq:modulus-proof1}
    \begin{aligned}
      \int_{\Omega_T}(u-u_i)(u^{m}-u^{m_i}_i)\dif x\dif t
      =& -\int_{\Omega_T}\nabla(u^{m}-u^{m_i}_i)\cdot
      \int_t^T\nabla(u^{m}-u^{m_i}_i)\dif s\dif x\dif t\\
      =&-\frac{1}{2}\int_{\Omega}
      \left[\int_0^T\nabla(u^{m}-u^{m_i}_i)\dif s\right]^2\dif x  
      \leq  0,
    \end{aligned}
  \end{equation}
  where we integrated with respect to $t$ to get the last line.
  Thus we have
  \begin{align*}
    c\int_{\Omega_T}\abs{u-u_i}^{1+m}\dif x\dif t  
    \leq &\int_{\Omega_T}(u-u_i)(u^{m}-u_i^{m})\dif x\dif t\\
      = & \int_{\Omega_T}(u-u_i)(u^{m}-u_i^{m_i})\dif x\dif t\\
       &+\int_{\Omega_T}(u-u_i)(u_i^{m_i}-u_i^{m})\dif x\dif t\\
      \leq & \int_{\Omega_T}(u-u_i)(u_i^{m_i}-u_i^{m})\dif x\dif t
    \end{align*}
    by \eqref{eq:monotonicity} and \eqref{eq:modulus-proof1}.

    We proceed by an application of H\"older's inequality, and get
    \begin{align*}
      \int_{\Omega_T}(u-u_i)(u_i^{m_i}-u_i^{m})\dif x\dif t
      \leq & \left(\int_{\Omega_T}\abs{u-u_i}^{1+m}\dif x\dif
        t\right)^{1/(1+m)}\\
      \times &\left(\int_{\Omega_T}\abs{u_i^{m_i}-u_i^{m}}^{(m+1)/m}
        \dif x\dif t\right)^{m/(m+1)}.
    \end{align*}
    Then we apply Lemma \ref{lem:alt-powers} inside the second
    integral, and get
    \begin{displaymath}
      \abs{u_i^{m_i}-u_i^{m}}\leq c(1+u_i^{m_i+\varepsilon}+u_i^{m+\varepsilon})
      \abs{m-m_i}.
    \end{displaymath}
    The choice of $\varepsilon$ will be made later. Thus
    \begin{multline*}
      \left(\int_{\Omega_T}\abs{u_i^{m_i}-u_i^{m}}^{(m+1)/m}\dif
        x\dif t\right)^{m/(m+1)}\\
      \leq c\left(\int_{\Omega_T}
        (1+u_i^{m_i+\varepsilon}+u_i^{m+\varepsilon})^{(m+1)/m}
        \dif x \dif t\right)^{m/(m+1)}\abs{m-m_i}.
    \end{multline*}
    We put all the estimates together, and end up with
    \begin{multline*}
      \norm{u-u_i}_{L^{m+1}(\Omega_T)} \\ \leq
      c\left(\int_{\Omega_T}
        (1+u_i^{m_i+\varepsilon}+u_i^{m+\varepsilon})^{(m+1)/m}
        \dif x \dif t\right)^{1/(m+1)}\abs{m-m_i}^{1/m}
    \end{multline*}

    To finish, we need a uniform (in $i$) $L^1$ bound for the the
    functions
    \begin{equation}\label{eq:modulus-proof12}
      u_i^{m_i(1+\varepsilon/m_i)(1+1/m)}\quad\text{and}\quad 
      u_i^{m_i(m/m_i+\varepsilon/m_i)(1+1/m)},
    \end{equation}
    at least for large $i$. We combine the Sobolev embedding (Lemma
    \ref{lem:sobolev}) and the energy estimate (Lemma
    \ref{lem:energy-est}), and get that $u_i^{2\kappa_i m_i}$,
    $i=1,2,\ldots$, is bounded in $L^1(\Omega)$, where $\kappa_i$ is
    given by
    \begin{displaymath}
      \kappa_i=1+\frac{1}{n}+\frac{1}{m_in}.
    \end{displaymath}
    The desired conclusion follows by proving that we may choose
    $\varepsilon$ small enough, so that the exponents in
    \eqref{eq:modulus-proof12} to are less than $2\kappa_im_i$ for
    large $i$. Indeed, we may choose $i$ large enough and
    $\varepsilon$ small enough, so that $1+\varepsilon/m_i$ and
    $m/m_i+\varepsilon/m_i$ are arbitrarily close to one, and so that
    $\kappa_i$ is arbitrarily close to $\kappa$. Hence to satisfy the
    conditions
    \begin{displaymath}
      m_i(1+\frac{\varepsilon}{m_i})(1+\frac{1}{m})
      <2\kappa_im_i\quad\text{and}\quad
      m_i(\frac{m}{m_i}+\frac{\varepsilon}{m_i})(1+\frac{1}{m})<2\kappa_im_i
    \end{displaymath}
    for large $i$, it suffices to require that
    \begin{displaymath}
      1+\frac{1}{m}<2\kappa.
    \end{displaymath}
    A computation shows that this holds if
    \begin{displaymath}
      m>\frac{n-2}{n+2},
    \end{displaymath}
    which is quaranteed by our assumption $m\geq 1$.  Thus above
    arguments give the desired bound
    \begin{displaymath}
      \norm{u-u_i}_{L^{1+m}(\Omega_T)}\leq c\abs{m-m_i}^{1/m}
    \end{displaymath}
    for $i$ large enough.
\end{proof}

\begin{remark}
  The reason why the above proof does not work well if $m<1$ is the
  use of \eqref{eq:monotonicity}. A similar inequality is of course
  available for $m<1$, but the exponents that come up in the course of
  the proof blow up as $m\to 1$.
\end{remark}

\begin{proof}[Proof of Theorem \ref{thm:alt-dirichlet-stab}]
  Theorem \ref{thm:modulus} implies that the sequence $(u_i)$
  converges in $L^{1+m}(\Omega_T)$ to $u$, the solution of the limit
  problem. For a subsequence we get pointwise a.e. convergence, and an
  application of Lemma~\ref{lem:conv-of-powers} then yields the
  pointwise convergence $u_i^{m_i}\to u^m$. After the pointwise
  convergences are available, the rest of the convergence claims are
  established as in the proof of Theorem~\ref{thm:stability}.
\end{proof}

\subsubsection*{Acknowledgement} The author would like to thank
professors Boris Adreianov and  Philippe Lauren\c{c}ot for pointing
out relevant references.

\def\cprime{$'$}


\begin{thebibliography}{10}

\bibitem{ABKO}
B.~Andreianov, M.~Bendahmane, K.~H. Karlsen, and S.~Ouaro.
\newblock Well-posedness results for triply nonlinear degenerate parabolic
  equations.
\newblock {\em J. Differential Equations}, 247(1):277--302, 2009.

\bibitem{Barenblatt}
G.~I. Barenblatt.
\newblock On self-similar motions of a compressible fluid in a porous medium.
\newblock {\em Akad. Nauk SSSR. Prikl. Mat. Meh.}, 16:679--698, 1952.

\bibitem{BenilanCrandall}
P.~B{\'e}nilan and M.~G. Crandall.
\newblock The continuous dependence on {$\varphi $} of solutions of
  {$u_{t}-\Delta \varphi (u)=0$}.
\newblock {\em Indiana Univ. Math. J.}, 30(2):161--177, 1981.

\bibitem{CF1}
L.~A. Caffarelli and A.~Friedman.
\newblock Asymptotic behavior of solutions of {$u_t=\Delta u^m$} as
  {$m\to\infty$}.
\newblock {\em Indiana Univ. Math. J.}, 36(4):711--728, 1987.

\bibitem{ChenKarlsen}
G.-Q. Chen and K.~H. Karlsen.
\newblock {$L^1$}-framework for continuous dependence and error estimates for
  quasilinear anisotropic degenerate parabolic equations.
\newblock {\em Trans. Amer. Math. Soc.}, 358(3):937--963, 2006.

\bibitem{CG}
B.~Cockburn and G.~Gripenberg.
\newblock Continuous dependence on the nonlinearities of solutions of
  degenerate parabolic equations.
\newblock {\em J. Differential Equations}, 151(2):231--251, 1999.

\bibitem{DahlbergKenig}
B.~E.~J. Dahlberg and C.~E. Kenig.
\newblock Nonnegative solutions of the porous medium equation.
\newblock {\em Comm. Partial Differential Equations}, 9(5):409--437, 1984.

\bibitem{DaskalopoulosKenig}
P.~Daskalopoulos and C.~E. Kenig.
\newblock {\em Degenerate diffusions -- Initial value problems and local
  regularity theory}, volume~1 of {\em EMS Tracts in Mathematics}.
\newblock European Mathematical Society (EMS), Z\"urich, 2007.

\bibitem{DiBenedettoBook}
E.~DiBenedetto.
\newblock {\em Degenerate parabolic equations}.
\newblock Universitext. Springer-Verlag, New York, 1993.

\bibitem{DiBenedettoFriedman}
E.~DiBenedetto and A.~Friedman.
\newblock H\"older estimates for nonlinear degenerate parabolic systems.
\newblock {\em J. Reine Angew. Math.}, 357:1--22, 1985.

\bibitem{DGV}
E.~DiBenedetto, U.~Gianazza, and V.~Vespri.
\newblock {\em Harnack's inequality for degenerate and singular parabolic
  equations}.
\newblock Springer Monographs in Mathematics. Springer, New York, 2012.

\bibitem{DGL}
E.~E.~DiBenedetto, U.~Gianazza, and N.~Liao.
\newblock Logarithmically singular parabolic equations as limits of the porous
  medium equation.
\newblock {\em Nonlinear Anal.}, 75(12):4513--4533, 2012.

\bibitem{HerreroPierreUniq}
M.~A. Herrero and M.~Pierre.
\newblock The {C}auchy problem for {$u_t=\Delta u^m$} when {$0<m<1$}.
\newblock {\em Trans. Amer. Math. Soc.}, 291(1):145--158, 1985.

\bibitem{Hui}
K.~M. Hui.
\newblock Singular limit of solutions of the equation {$u_t=\Delta({u^m}/m)$}
  as {$m\to0$}.
\newblock {\em Pacific J. Math.}, 187(2):297--316, 1999.

\bibitem{Hui2}
K.~M. Hui.
\newblock Singular limit of solutions of the very fast diffusion equation.
\newblock {\em Nonlinear Anal.}, 68(5):1120--1147, 2008.

\bibitem{KarlsenRisebro}
K.~H. Karlsen and N.~H. Risebro.
\newblock On the uniqueness and stability of entropy solutions of nonlinear
  degenerate parabolic equations with rough coefficients.
\newblock {\em Discrete Contin. Dyn. Syst.}, 9(5):1081--1104, 2003.

\bibitem{KinnunenLindqvist2}
J.~Kinnunen and P.~Lindqvist.
\newblock Definition and properties of supersolutions to the porous medium
  equation.
\newblock {\em J. Reine Angew. Math.}, 618:135--168, 2008.

\bibitem{ParaStab}
J.~Kinnunen and M.~Parviainen.
\newblock Stability for degenerate parabolic equations.
\newblock {\em Adv. Calc. Var.}, 3(1):29--48, 2010.

\bibitem{LiMartio}
G.~Li and O.~Martio.
\newblock Stability of solutions of varying degenerate elliptic equations.
\newblock {\em Indiana Univ. Math. J.}, 47(3):873--891, 1998.

\bibitem{LqvistStab}
P.~Lindqvist.
\newblock Stability for the solutions of {${\rm div}\,(\vert \nabla u\vert
  ^{p-2}\nabla u)=f$} with varying {$p$}.
\newblock {\em J. Math. Anal. Appl.}, 127(1):93--102, 1987.

\bibitem{LqvistRayleigh}
P.~Lindqvist.
\newblock On nonlinear {R}ayleigh quotients.
\newblock {\em Potential Anal.}, 2(3):199--218, 1993.

\bibitem{FDE-meas}
T.~Lukkari.
\newblock The fast diffusion equation with measure data.
\newblock {\em NoDEA Nonlinear Differential Equations Appl.}, 19(3):329--343,
  2012.

\bibitem{Naumann}
J.~Naumann.
\newblock {\em Einf\"uhrung in die {T}heorie parabolischer
  {V}ariationsungleichungen}, volume~64 of {\em Teubner-Texte zur Mathematik
  [Teubner Texts in Mathematics]}.
\newblock BSB B. G. Teubner Verlagsgesellschaft, Leipzig, 1984.
\newblock With English, French and Russian summaries.

\bibitem{PanApprox}
J.~Q. Pan.
\newblock The approximating behavior on nonlinearities of solutions of a
  degenerate parabolic equation.
\newblock {\em IMA J. Appl. Math.}, 72(4):464--475, 2007.

\bibitem{PanGang}
J.~Q. Pan and L.~Gang.
\newblock The linear approach for a nonlinear infiltration equation.
\newblock {\em European J. Appl. Math.}, 17(6):665--675, 2006.

\bibitem{PierreUniq}
M.~Pierre.
\newblock Uniqueness of the solutions of {$u_{t}-\Delta \varphi (u)=0$} with
  initial datum a measure.
\newblock {\em Nonlinear Anal.}, 6(2):175--187, 1982.

\bibitem{Sacks}
P.~E. Sacks.
\newblock A singular limit problem for the porous medium equation.
\newblock {\em J. Math. Anal. Appl.}, 140(2):456--466, 1989.

\bibitem{Simon}
J.~Simon.
\newblock Compact sets in the space {$L\sp p(0,T;B)$}.
\newblock {\em Ann. Mat. Pura Appl. (4)}, 146:65--96, 1987.

\bibitem{VazquezBook2}
J.~L. V{\'a}zquez.
\newblock {\em Smoothing and decay estimates for nonlinear diffusion
  equations}, volume~33 of {\em Oxford Lecture Series in Mathematics and its
  Applications}.
\newblock Oxford University Press, Oxford, 2006.
\newblock Equations of porous medium type.

\bibitem{VazquezBook}
J.~L. V{\'a}zquez.
\newblock {\em The porous medium equation -- Mathematical theory}.
\newblock Oxford Mathematical Monographs. The Clarendon Press Oxford University
  Press, Oxford, 2007.

\bibitem{ZeldovichKompaneets}
Ya.~B. Zel{\cprime}dovi{\v{c}} and A.~S. Kompaneec.
\newblock On the theory of propagation of heat with the heat conductivity
  depending upon the temperature.
\newblock In {\em Collection in honor of the seventieth birthday of academician
  {A}. {F}. {I}offe}, pages 61--71. Izdat. Akad. Nauk SSSR, Moscow, 1950.

\end{thebibliography}
\end{document}